\documentclass[12pt,twoside]{amsart}
\usepackage{amssymb,amscd}
\usepackage{mathrsfs}
\usepackage{array,float}

\addtolength{\textwidth}{2cm}
\addtolength{\textheight}{2cm}
\addtolength{\voffset}{-1cm}
\addtolength{\hoffset}{-1cm}

% adjust row height in tables
%
\setlength{\extrarowheight}{0.05cm}

\linespread{1.1}

\theoremstyle{plain}
\newtheorem{thm}{Theorem}[section]
\newtheorem{lem}[thm]{Lemma}
\newtheorem{pro}[thm]{Proposition}
\newtheorem{cor}[thm]{Corollary}
\newtheorem{conjecture}[thm]{Conjecture}
\newtheorem*{conjecture-prime}{Conjecture}
\newtheorem{question}[thm]{Question}

\theoremstyle{remark}
\newtheorem*{rem*}{Remark}

\theoremstyle{definition} 
\newtheorem{definition}[thm]{Definition}
\newtheorem{example}[thm]{Example}

\numberwithin{equation}{section}

\newcommand{\Z}{\mathbb{Z}}   
\newcommand{\Q}{\mathbb{Q}}
\newcommand{\N}{\mathbb{N}}
\newcommand{\F}{\mathbb{F}}

\DeclareMathOperator{\Cen}{C}
\DeclareMathOperator{\Zen}{Z}
\DeclareMathOperator{\rank}{rk}
\DeclareMathOperator{\GL}{GL}
\DeclareMathOperator{\sat}{sat}

\newcommand{\ackn}{  \noindent{\sc Acknowledgement }\hspace{5pt} }

\renewcommand{\phi}{\varphi}

%%%

\begin{document}

\title[Characterisation of uniform pro-$p$ groups]{A characterisation
  of uniform pro-$p$ groups}

\author{Benjamin Klopsch}
\address{Department of Mathematics \\
  Royal Holloway, University of London \\
  Egham TW20 0EX, UK}
\curraddr{Institut f\"ur Algebra und Geometrie, Mathematische 
  Fakult\"at, Otto-von-Guericke-Universit\"at Magdeburg, 39016
  Magdeburg, Germany}

\email{Benjamin.Klopsch@rhul.ac.uk}

\author{Ilir Snopce}
\address{Universidade Federal do Rio de Janeiro\\
  Instituto de Matem\'atica \\
  20785-050 Rio de Janeiro, RJ, Brasil }
\email{ilir@im.ufrj.br}

\begin{abstract}
  Let $p$ be a prime.  Uniform pro-$p$ groups play a central role in
  the theory of $p$-adic Lie groups.  Indeed, a topological group
  admits the structure of a $p$-adic Lie group if and only if it
  contains an open pro-$p$ subgroup which is uniform.  Furthermore,
  uniform pro-$p$ groups naturally correspond to powerful $\Z_p$-Lie
  lattices and thus constitute a cornerstone of $p$-adic Lie theory.

  In the present paper we propose and supply evidence for the
  following conjecture, aimed at characterising uniform pro-$p$
  groups.  Suppose that $p \geq 3$ and let $G$ be a torsion-free
  pro-$p$ group of finite rank.  Then $G$ is uniform if and only if
  its minimal number of generators is equal to the dimension of $G$ as
  a $p$-adic manifold, i.e., $d(G) = \dim(G)$.  In particular, we
  prove that the assertion is true whenever $G$ is soluble or $p >
  \dim(G)$.
\end{abstract}

\subjclass[2010]{20E18, 22E20, 20D15}

\maketitle

\section{Introduction}

Throughout let $p$ be a prime.  Lazard's seminal paper \emph{Groupes
  analytiques $p$\nobreakdash-adiques}~\cite{La65}, published in 1965,
provides a comprehensive treatment of the theory of $p$-adic analytic
Lie groups.  One of his main results was a solution of the
$p$\nobreakdash-adic analogue of Hilbert's 5th problem.  More
precisely, he obtained the following algebraic characterisation of
$p$-adic analytic groups: a topological group is $p$-adic analytic if
and only if it contains a finitely generated open pro-$p$ subgroup
which is saturable.  In the 1980s Lubotzky and Mann introduced the
concept of a powerful pro-$p$ group and used this notion to
re-interpret the group-theoretic aspects of Lazard's work, by and
large sidestepping the analytic side of the theory.  Central to their
approach are the uniformly powerful pro-$p$ groups, which play a role
similar to the one of saturable pro-$p$ groups in Lazard's work.  A
detailed treatment of $p$-adic analytic groups from this point of view
and a sample of its manifold applications are given
in~\cite{DidSMaSe99}.

A pro-$p$ group $G$ is said to be \emph{powerful} if $p \geq 3$ and
$[G,G]\leq G^p$, or $p=2$ and $[G,G]\leq G^4$.  Here, $[G,G]$ and
$G^p$ denote the (closures of the) commutator subgroup and the
subgroup generated by all $p$th powers.  The definition of a uniformly
powerful pro-$p$ group is stated in Section~\ref{sec:uniform}.  For
the moment it suffices to recall that a pro-$p$ group is
\emph{uniformly powerful}, or \emph{uniform} for short, if and only if
it is finitely generated, powerful and torsion-free.  The \emph{rank}
of a pro-$p$ group $G$ is the basic invariant
\begin{displaymath}
  \rank(G) := \sup \{ d(H) \mid H \text{ a closed subgroup of } G  \},
\end{displaymath}
where $d(H)$ denotes the minimal cardinality of a topological
generating set for~$H$.  

The class of $p$-adic analytic groups can now be characterised as
follows; see~\cite[Corollary~8.34]{DidSMaSe99}.  A topological group
admits the structure of a $p$-adic analytic Lie group if and only if
it contains an open pro-$p$ subgroup of finite rank.  Moreover, a
pro-$p$ group has finite rank if and only if it admits a uniform open
subgroup.

A key invariant of a $p$-adic Lie group $G$ is its \emph{dimension} as
a $p$-adic manifold which we denote by~$\dim(G)$.  Algebraically,
$\dim(G)$ can be described as $d(U)$, where $U$ is any uniform open
pro-$p$ subgroup of~$G$.  In~\cite{Kl11} it is shown that, for $p \geq
3$, every torsion-free compact $p$-adic Lie group $G$ satisfies
$\rank(G) = \dim(G)$.

\subsection{Main results} \label{sec:main-results} The purpose of the
present paper is to propose a new characterisation of uniform pro-$p$
groups in terms of their minimal numbers of generators.  It is well
known that every uniform pro-$p$ group $G$ satisfies $d(G) = \dim(G)$.
We propose and supply evidence for the following conjecture.

\begin{conjecture} \label{con:infinite} Suppose that $p \geq 3$ and
  let $G$ be a torsion-free pro-$p$ group of finite rank.  Then $G$ is
  uniform if and only if $d(G) = \dim(G)$.
\end{conjecture}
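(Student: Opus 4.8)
The plan is to prove the non-trivial implication, that $d(G)=\dim(G)$ forces $G$ to be uniform, by reducing it to a single index estimate. The reverse direction is immediate: for uniform $G$ one has $\Phi(G)=G^p$ and $d(G)=\dim(G)$. For the forward direction, recall from \cite{Kl11} that $\rank(G)=\dim(G)$ for torsion-free $G$ with $p\geq 3$, so that $d(G)\leq\rank(G)=\dim(G)$ holds a priori and the hypothesis $d(G)=\dim(G)$ is extremal. Since the Frattini subgroup satisfies $\Phi(G)=\overline{[G,G]\,G^p}\supseteq G^p$ and $G/\Phi(G)$ is elementary abelian, we obtain
\[
  p^{\dim(G)}=p^{d(G)}=[G:\Phi(G)]\leq[G:G^p].
\]
Thus the whole conjecture reduces to the reverse bound $[G:G^p]\leq p^{\dim(G)}$: granting it we get $[G:\Phi(G)]=[G:G^p]$, hence $\Phi(G)=G^p$ because $\Phi(G)\supseteq G^p$ has finite index, so $[G,G]\leq G^p$ and $G$ is powerful; being torsion-free and finitely generated, $G$ is then uniform.

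To establish $[G:G^p]\leq p^{\dim(G)}$ I would pass to the Lie side. When $G$ is saturable it carries an associated $\Z_p$-Lie lattice $L$ with $\dim(G)=\rank_{\Z_p}(L)$ and the underlying bijection $\exp\colon L\to G$ intertwines the group law with the Hausdorff series; in particular $g^p=\exp(p\log g)$, so the set of $p$th powers is exactly $\exp(pL)$. The key point is that $\exp(pL)$ is already a subgroup, whence $G^p=\exp(pL)$ and $[G:G^p]=[L:pL]=p^{\dim(G)}$. To see closure, let $Z_k$ denote the degree-$k$ homogeneous component of the Hausdorff series; for $p$ odd the $p$-part of its denominator has valuation at most $\lfloor(k-1)/(p-1)\rfloor$, and substituting $px,py$ scales $Z_k$ by $p^k$. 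Since
\[
  k-\Big\lfloor\tfrac{k-1}{p-1}\Big\rfloor\geq 1\qquad\text{for all }k\geq1,\ p\geq3,
\]
every homogeneous term of $\mathrm{BCH}(px,py)$ lies in $pL$, so $\mathrm{BCH}(px,py)\in pL$ and $\exp(pL)$ is closed under multiplication. This yields the required bound (in fact with equality) and completes the argument whenever $G$ is saturable.

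The main obstacle is therefore to guarantee saturability of $G$ (equivalently, the index identity $[G:G^p]=p^{\dim(G)}$) in full generality. This is unproblematic in two regimes, which recover the known cases: if $p>\dim(G)$ then $G$ is saturable by the results of Gonz\'alez-S\'anchez, and if $G$ is soluble one can build the Lie lattice step by step along the derived series. The genuinely hard case is $p\leq\dim(G)$ with $G$ insoluble. Here the mod-$p$ Lie algebra $L/pL$ may have nilpotency class at least $p$, the Lazard correspondence for the finite quotient $G/G^p$ collapses, and it is no longer clear that the $p$th powers generate a subgroup of index only $p^{\dim(G)}$: a priori $G/G^p$ is merely a finite group of exponent $p$ whose nilpotency class can exceed $p$, and rank considerations alone do not bound its order by $p^{\dim(G)}$. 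Closing this gap --- either by proving saturability directly in this regime, or by bounding $|G/G^p|$ through a Lazard-type filtration of the restricted $\F_p$-Lie algebra of $G$ that remains valid in class $\geq p$ --- is the crux of the conjecture and the step I expect to be the main difficulty.
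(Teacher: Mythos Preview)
Your proposal is honest about its scope, and that honesty is well placed: the statement is a \emph{conjecture} in the paper, not a theorem, and the paper does not prove it in full generality either. So there is no ``paper's own proof'' of the full statement to compare against. What the paper does prove are the special cases you also isolate: the saturable case (hence $p>\dim(G)$) and the soluble case. Your reduction to the single inequality $[G:G^p]\leq p^{\dim(G)}$ is exactly right and is the same reduction the paper uses repeatedly.

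For the saturable case, your argument is correct but differs from the paper's. You work directly on the Lie lattice and use the Lazard denominator estimate on the Hausdorff series to show $\exp(pL)$ is a subgroup. The paper instead proves a more general result (Theorem~\ref{thm:injective}): for any $G$ with an open PF-embedded subgroup and injective $p$-power map, it shows via a Haar-measure argument that the \emph{set} $G^{\{p\}}$ of $p$th powers has measure exactly $p^{-\dim(G)}$, hence $[G:G^p]\leq p^{\dim(G)}$. This buys more than saturability of $G$ itself; it covers all closed subgroups of saturable groups. Your BCH approach, by contrast, needs the Lie lattice on $G$ directly.

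Your treatment of the soluble case is where there is a real gap relative to the paper. The sentence ``if $G$ is soluble one can build the Lie lattice step by step along the derived series'' is not a proof, and it is not how the paper proceeds. A torsion-free soluble pro-$p$ group of finite rank need not be saturable, so you cannot simply invoke your BCH computation. The paper's argument (Theorems~\ref{thm:soluble} and~\ref{thm:soluble-p-equals-3}) is genuinely different: one passes to a just-infinite quotient $H$, which is virtually abelian, and then uses the Heller--Reiner classification of $\Z_p C_p$-lattices together with the bound $d(\bar G)\leq\lfloor d/(p-1)\rfloor$ to force a contradiction unless $H$ is already powerful. The case $p=3$ requires a separate and more delicate analysis. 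None of this is captured by ``building the Lie lattice along the derived series.''

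Finally, your concluding paragraph is an accurate diagnosis: the obstruction in the general insoluble case with $p\leq\dim(G)$ is precisely that one cannot guarantee $[G:G^p]=p^{\dim(G)}$, and the paper leaves this open too, offering instead Proposition~\ref{pro:qu-implies-conj} which reduces the conjecture to a question about finite $p$-groups.
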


Standard examples show that the assertion of the conjecture cannot
extend to $p=2$ without modifications; see Section~\ref{sec:uniform}.
It is not quite clear what one could reasonably hope for in this case
and for now we shall concentrate on $p \geq 3$.  Our first results
imply that Conjecture~\ref{con:infinite} is indeed true for soluble
pro-$p$ groups.  Every profinite group $G$ of finite rank has a
maximal finite normal subgroup.  We refer to this subgroup as the
\emph{periodic radical} of $G$ and denote it by~$\pi(G)$.

\begin{thm} \label{thm:soluble} Suppose that $p\geq 5$ and let $G$ be
  a soluble pro-$p$ group of finite rank such that $\pi(G)=1$.  If
  $d(G)=\dim(G)$ then $G$ is uniform.
\end{thm}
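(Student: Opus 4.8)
The plan is to proceed by induction on the derived length of $G$, using a suitable normal series to reduce the problem to understanding how the generator count behaves under extensions. The starting point is the general estimate $d(G) \le \dim(G)$, which holds for every pro-$p$ group of finite rank (since $\dim(G) = d(U)$ for a uniform open subgroup $U$ and $d$ can only drop on passing to $G$ from a finite-index overgroup by the Schreier-type bound — more precisely one uses that $\dim$ is additive on short exact sequences while $d$ is subadditive). So the hypothesis $d(G) = \dim(G)$ forces equality to be attained throughout, and the strategy is to show that any "defect" in powerfulness or any torsion appearing in a section would make $d(G)$ strictly smaller than $\dim(G)$.

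First I would treat the abelian case: a torsion-free abelian pro-$p$ group of finite rank is just $\Z_p^{\,d}$, which is uniform, and here $d(G) = \dim(G)$ automatically, so there is nothing to prove. Next, for the inductive step, let $A$ be the last nontrivial term of the derived series, a torsion-free abelian normal subgroup, and set $Q = G/A$. One wants $\dim(G) = \dim(A) + \dim(Q)$ and $d(G) \le d(A/A^p \cdot [G,A])\text{-type contribution} + d(Q)$; the point is to choose $A$ cleverly — perhaps not the derived subgroup itself but an isolator/saturation of a characteristic piece — so that $G/A$ again has trivial periodic radical and satisfies $d(G/A) = \dim(G/A)$, allowing the induction hypothesis to apply and yield that $Q$ is uniform. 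The condition $\pi(G) = 1$ is what lets us pass torsion-freeness down the series: in a pro-$p$ group of finite rank, torsion is controlled by the periodic radical of appropriate sections, and $p \ge 5$ gives enough room (via the Lazard correspondence on the relevant $\Z_p$-Lie lattice) to avoid the small-prime pathologies.

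Once $Q = G/A$ is known to be uniform, it acts on the $\Z_p$-module $A$, and the remaining task is to show that this action is trivial modulo $p$ (equivalently, that $[G,A] \le A^p$) and that $A$ itself is "saturated" in $G$, i.e. $G$ has no unexpected $p$-th roots; together these give $[G,G] \le G^p$ and torsion-freeness of $G$, hence uniformity. This is where the generator count does the real work: if $Q$ acted nontrivially on $A/pA$, then by a Nakayama/Frattini argument the images of the $A$-part in $G/\Phi(G)$ would collapse, so $d(G)$ would be strictly less than $\dim(A) + \dim(Q) = \dim(G)$, contradicting the hypothesis. I expect the main obstacle to be precisely the bookkeeping in this last step — controlling $G/\Phi(G)$ for the extension $1 \to A \to G \to Q \to 1$ when $Q$ is uniform but the action of $Q$ on $A$ is a priori arbitrary — and in particular making sure that the reduction produces a section with trivial periodic radical at each stage so that the induction is legitimate. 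Handling the cohomological obstruction to splitting (or rather, showing it does not affect the generator count) and the interplay with the hypothesis $p \ge 5$ rather than merely $p \ge 3$ is the delicate part; presumably $p \ge 5$ is needed so that $2$ is invertible in a way that makes the relevant Lie-theoretic Campbell–Hausdorff manipulations valid on $A$ and its quotients.
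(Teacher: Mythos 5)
There is a genuine gap, and it sits at the foundation of your induction: you treat the hypothesis $\pi(G)=1$ as if it made $G$ and the relevant sections torsion-free from the outset. It does not. The periodic radical is the maximal finite \emph{normal} subgroup, and a soluble pro-$p$ group of finite rank with $\pi(G)=1$ can perfectly well contain elements of finite order: $C_p \ltimes \Z_p[\zeta_p]$, with a generator of order $p$ acting as multiplication by a primitive $p$th root of unity $\zeta_p$, has trivial periodic radical but is not torsion-free. (In the paper, torsion-freeness of $G$ is deduced only at the very end, \emph{after} powerfulness has been established, via \cite[Theorem~4.20]{DidSMaSe99}, which is a statement about powerful groups.) Because of this, your inductive step collapses: if $A$ is the last term of the derived series, $G/A$ need not have trivial periodic radical, and the ``isolator'' repair you propose fails in exactly the problematic cases --- in Example~\ref{one} (for $p=3$) the isolator of $[G,G]$ is all of $G$, which is neither abelian nor torsion-free. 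More importantly, by assuming every section is a torsion-free pro-$p$ group to which induction applies, you never confront the one configuration where the theorem has real content: a nontrivial \emph{finite} $p$-group $\bar{G}$ acting faithfully on a torsion-free abelian normal subgroup $A \cong \Z_p^{d}$. The paper reduces to precisely this situation (via a just-infinite quotient, not the derived series) and rules it out by decomposing $A$ as a $\Z_p C_p$-lattice using the Heller--Reiner classification, which yields $d(G) \leq d(\bar{G}) + d_{\Z_p\bar{C}}(A) \leq 2\lfloor d/(p-1)\rfloor < d$. That inequality is the true reason for requiring $p \geq 5$; your suggestion that $p\ge 5$ is about invertibility of $2$ in Campbell--Hausdorff manipulations is off the mark --- for $p=3$ and $d=2$ the bound degenerates to $2 \not< 2$, which is exactly Example~\ref{one}.

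Two smaller points. First, $d(G)\le\dim(G)$ is not a formal Schreier-type subadditivity fact; it fails already for $C_p\times\Z_p$, and the version you need is a theorem of Klopsch requiring $p\ge 3$ and $\pi(G)=1$ (the paper invokes \cite[Theorem~1.3]{Kl11}). Second, your concluding ``Nakayama'' step is essentially sound but redundant: once $Q=G/A$ is known to be powerful and $A$ is abelian, the index count
\begin{displaymath}
  \lvert G:G^p \rvert \leq \lvert Q:Q^p\rvert\,\lvert A:A^p\rvert = p^{\dim(Q)+\dim(A)} = p^{d(G)} = \lvert G:G^p[G,G]\rvert \leq \lvert G:G^p\rvert
\end{displaymath}
already forces $[G,G]\le G^p$ without first proving $[G,A]\le A^p$. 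The missing machinery is not at that end of the argument; it is the exclusion of finite-order automorphisms of $A$, and that is where all the work (and the hypothesis on $p$) lies.
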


In Section~\ref{sec:uniform} we give an example to show that the
assertion of Theorem~\ref{thm:soluble} does not extend to $p=3$
without modifications.  Nevertheless a separate analysis leads to the
following somewhat weaker theorem, still confirming
Conjecture~\ref{con:infinite}.

\begin{thm} \label{thm:soluble-p-equals-3} Let $G$ be a torsion-free
  soluble pro-$3$ group of finite rank.  If $d(G)=\dim(G)$ then $G$ is
  uniform.
\end{thm}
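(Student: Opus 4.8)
The plan is to show that $G$ is powerful; since a finitely generated torsion-free powerful pro-$p$ group is uniform and $G$, being of finite rank, is finitely generated, this suffices. As $\Phi(G)=G^3[G,G]\supseteq G^3$, powerfulness of $G$ is equivalent to $[G,G]\leq G^3$, hence to $|G:G^3|=|G:\Phi(G)|=3^{d(G)}$; since $|G:G^3|\geq 3^{d(G)}$ always holds, it is enough to prove $|G:G^3|\leq 3^{\dim(G)}$ and then invoke $d(G)=\dim(G)$.

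The argument proceeds by induction on $\dim(G)$. The case of abelian $G$ (then $G$ is torsion-free abelian of finite rank, so $G\cong\Z_3^m$ is uniform) being clear, assume $G$ is non-abelian and that the theorem is known for torsion-free soluble pro-$3$ groups of finite rank of smaller dimension. The first step is to choose a nontrivial characteristic abelian subgroup $A$ of $G$ such that $\bar G:=G/A$ is again torsion-free; producing such an $A$ is the delicate point and is discussed at the end. Granting it, $A\cong\Z_3^n$ with $n\geq 1$, $A$ is proper (as $G$ is non-abelian), and $\bar G$ is a torsion-free soluble pro-$3$ group of finite rank with $\dim(\bar G)=\dim(G)-n<\dim(G)$. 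From subadditivity of the generator number and the inequality $d(H)\leq\dim(H)$ for every torsion-free $H$ of this kind (a consequence of $\rank(H)=\dim(H)$, quoted in the introduction) we obtain
\begin{displaymath}
  d(G)\ \leq\ d(A)+d(\bar G)\ =\ n+d(\bar G)\ \leq\ n+\dim(\bar G)\ =\ \dim(G)\ =\ d(G),
\end{displaymath}
so that every inequality is an equality; in particular $d(\bar G)=\dim(\bar G)$, so $\bar G$ is uniform (hence powerful) by the inductive hypothesis, and $d(G)=n+d(\bar G)$.

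It remains to transfer this to $G$. A short computation shows that $d(G)=n+d(\bar G)$ forces $A\cap\Phi(G)=A^3$; since $A^3\leq G^3\leq\Phi(G)$ this gives $A\cap G^3=A^3$, and, because $[G,A]\leq[G,G]\leq\Phi(G)$ while $[G,A]\leq A$, also $[G,A]\leq A\cap\Phi(G)=A^3\leq G^3$. Now pass to $\tilde G:=G/G^3$; it has exponent $3$, and $\Phi(\tilde G)=\Phi(G)/G^3$, so $d(\tilde G)=d(G)$. The subgroup $\tilde A:=AG^3/G^3\cong A/A^3\cong\F_3^n$ is \emph{central} in $\tilde G$ (because $[G,A]\leq G^3$), with abelian quotient $\tilde G/\tilde A\cong\bar G/(\bar G)^3\cong\F_3^{d(\bar G)}$, the last isomorphism holding since $\bar G$ is powerful. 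Hence $[\tilde G,\tilde G]\leq\tilde A$, and $\Phi(\tilde G)=[\tilde G,\tilde G]$ as $\tilde G$ has exponent $3$, so
\begin{displaymath}
  n+d(\bar G)\ =\ d(\tilde G)\ =\ \dim_{\F_3}\bigl(\tilde G/[\tilde G,\tilde G]\bigr)\ =\ \bigl(n-\dim_{\F_3}[\tilde G,\tilde G]\bigr)+d(\bar G).
\end{displaymath}
Therefore $[\tilde G,\tilde G]=1$, i.e.\ $\tilde G$ is abelian, i.e.\ $[G,G]\leq G^3$; thus $G$ is powerful and, being finitely generated and torsion-free, uniform.

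The main obstacle is the deferred step: inside an arbitrary non-abelian torsion-free soluble pro-$3$ group $G$ of finite rank, producing a nontrivial characteristic abelian subgroup $A$ with $G/A$ torsion-free. The obvious candidates fail in general: the last term $G^{(\ell-1)}$ of the derived series, its isolator $\{g\in G:g^{3^k}\in G^{(\ell-1)}\text{ for some }k\}$ (which need not be a subgroup, nor abelian, when $G$ is merely soluble), and the Fitting subgroup can all fail to produce a torsion-free quotient. Indeed this is precisely the phenomenon behind the $p=3$ counterexample to Theorem~\ref{thm:soluble} among groups with trivial periodic radical but with torsion — such as $C_3\ltimes\Z_3^2$ with $C_3$ acting through an element of order $3$, which has minimal number of generators and dimension both $2$ yet is not uniform — and it is here that torsion-freeness of $G$, not merely $\pi(G)=1$, must be brought to bear. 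I would address it using the structure theory of soluble $p$-adic analytic groups: such a $G$ is nilpotent-by-abelian-by-finite, one works inside a torsion-free nilpotent characteristic subgroup $N$ (e.g.\ the Fitting subgroup), where isolators of the terms of the lower central series and of the commutator subgroups $[G,N],[G,[G,N]],\dots$ are well behaved, and selects $A$ among these so that $G/A$ is again torsion-free; verifying this last property, together with the compatibility of $d$ and $\dim$ under passage to $G/A$, is where the bulk of the work, and the genuine divergence from the $p\geq 5$ case of Theorem~\ref{thm:soluble}, resides.
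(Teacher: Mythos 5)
The second half of your argument --- the passage from a normal abelian $A\cong\Z_3^n$ with $G/A$ torsion-free, uniform and $d(G)=n+d(G/A)$ to the conclusion $[G,G]\leq G^3$ via the quotient $G/G^3$ --- is correct, and the overall strategy (peel off an abelian normal subgroup from the bottom) is genuinely different from the paper's proof, which instead quotients from the top: it passes to a just-infinite image $H=G/N$, shows via Lemma~\ref{three} that $\dim(H)\leq 2$, identifies the only problematic case as $H\cong C_3\ltimes\Z_3[\xi]$, and eliminates it using Lemma~\ref{lem:uniform-by-cyclic} (a preimage $z$ of the torsion element satisfies $z^3\in N\setminus N^3$, costing a generator). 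However, as written your proof has a genuine gap, and you acknowledge it yourself: the existence of a nontrivial characteristic (or even just normal) abelian closed subgroup $A$ with $G/A$ torsion-free is never established. Everything else in your argument is downstream of this step, and the concluding paragraph only sketches a programme ("I would address it using the structure theory\dots") without carrying it out, so the proof is incomplete at its foundation.

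The gap is closable, and more easily than your closing paragraph suggests. Set $A_0=G$ and let $A_{i+1}$ be the preimage in $A_i$ of the (finite) torsion subgroup of the finitely generated abelian pro-$3$ group $A_i/\overline{[A_i,A_i]}$. Each $A_{i+1}$ is characteristic in $A_i$, hence in $G$, and $A_i/A_{i+1}\cong\Z_3^{m_i}$ is torsion-free with $m_i\geq 1$ whenever $A_i\neq 1$ (a nontrivial torsion-free soluble pro-$3$ group of finite rank has infinite abelianisation, e.g.\ by additivity of $\dim$ and solubility of the associated Lie algebra). So the chain strictly decreases in dimension and terminates: for the largest $k$ with $A_k\neq 1$ one has $\overline{[A_k,A_k]}\leq A_{k+1}=1$, so $A:=A_k$ is abelian, nontrivial, proper (as $G$ is non-abelian), and $G/A$ is torsion-free because it is an iterated extension of torsion-free groups by torsion-free groups. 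This avoids the isolator pathologies you worry about, since at each stage the torsion elements are taken inside an \emph{abelian} quotient, where they automatically form a subgroup. With this lemma supplied, your argument is complete and, notably, works uniformly for all $p\geq 3$, whereas the paper's treatment of $p=3$ requires the separate analysis of Lemmas~\ref{three} and~\ref{lem:uniform-by-cyclic}.
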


For a finitely generated nilpotent group $\Gamma$ we denote by
$\Pi(\Gamma)$ the set of all primes $q$ such that $\Gamma$ contains an
element of order~$q$.  It is known that the torsion elements of
$\Gamma$ form a finite subgroup, hence the set $\Pi(\Gamma)$ is
finite.  As a consequence of the above theorems we obtain the
following corollary.

\begin{cor} \label{cor:nilpotent} Let $\Gamma$ be a finitely generated
  nilpotent group and let $h(\Gamma)$ denote the Hirsch length
  of~$\Gamma$.  Suppose that $p \notin \Pi(\Gamma) \cup \{2 \}$. Then
  the pro-$p$ completion $\widehat{\Gamma}_p$ of $\Gamma$ is a uniform
  pro-$p$ group if and only if $\dim_{\F_p}(\Gamma / \Gamma^p
  [\Gamma,\Gamma]) = h(\Gamma)$.
\end{cor}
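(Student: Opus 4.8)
The plan is to deduce the corollary from Theorems~\ref{thm:soluble} and~\ref{thm:soluble-p-equals-3} applied to the pro-$p$ group $G := \widehat{\Gamma}_p$, after recording the structure of $G$ and performing two routine translations of invariants. First I would pin down the basic properties of $G$. Let $T$ be the torsion subgroup of $\Gamma$, which is finite. Since $p \notin \Pi(\Gamma)$, the order $|T|$ is prime to $p$, so the image of $T$ in every finite $p$-group quotient of $\Gamma$ is trivial; hence such quotients are exactly the finite $p$-group quotients of $\Gamma/T$, and therefore $G = \widehat{\Gamma}_p \cong \widehat{(\Gamma/T)}_p$. Thus we may assume $\Gamma$ is torsion-free, finitely generated and nilpotent. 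Such a $\Gamma$ embeds into a group $U_m(\Z)$ of unitriangular integer matrices, so $G$ embeds as a closed subgroup of the torsion-free pro-$p$ group $U_m(\Z_p)$, which has finite rank. Consequently $G$ is a torsion-free, finitely generated, nilpotent (in particular soluble) pro-$p$ group of finite rank, and in particular $\pi(G) = 1$. Taking a Mal'cev basis of $\Gamma$, i.e.\ a central series with infinite cyclic factors whose number is $h(\Gamma)$, and passing to closures in $G$, one obtains a central series of $G$ with procyclic factors each isomorphic to $\Z_p$; hence $\dim(G) = h(\Gamma)$.

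Next I would translate the generator condition. The canonical map $\Gamma \to G$ has dense image, so it induces an isomorphism between the maximal elementary abelian $p$-group quotients of $\Gamma$ and of $G$. The former quotient is $\Gamma / \Gamma^p [\Gamma,\Gamma]$, while the latter is $G/\Phi(G)$, where $\Phi(G) = \overline{G^p[G,G]}$ is the Frattini subgroup of $G$; and $d(G) = \dim_{\F_p}(G/\Phi(G))$ by the standard description of the minimal number of generators of a pro-$p$ group. Therefore $d(G) = \dim_{\F_p}(\Gamma / \Gamma^p[\Gamma,\Gamma])$. Combining this with the previous paragraph, the hypothesis $\dim_{\F_p}(\Gamma/\Gamma^p[\Gamma,\Gamma]) = h(\Gamma)$ is precisely the equality $d(G) = \dim(G)$.

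It now suffices to show that $G$ is uniform if and only if $d(G) = \dim(G)$. The forward implication is the well-known fact, recalled in Section~\ref{sec:main-results}, that every uniform pro-$p$ group $G$ satisfies $d(G) = \dim(G)$. For the converse, assume $d(G) = \dim(G)$. If $p \geq 5$, then $G$ is a soluble pro-$p$ group of finite rank with $\pi(G) = 1$, so Theorem~\ref{thm:soluble} shows that $G$ is uniform. If $p = 3$ (which can occur here only when $3 \notin \Pi(\Gamma)$), then $G$ is a torsion-free soluble pro-$3$ group of finite rank, so Theorem~\ref{thm:soluble-p-equals-3} shows that $G$ is uniform. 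The only step that is not purely formal is the structural input of the first paragraph, namely that $\widehat{\Gamma}_p$ is torsion-free of finite rank with $\dim(\widehat{\Gamma}_p) = h(\Gamma)$ for torsion-free finitely generated nilpotent $\Gamma$; I expect this classical fact about pro-$p$ completions of nilpotent groups, rather than the invocation of the two theorems, to be the main point one must set up carefully.
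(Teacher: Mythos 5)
Your argument is correct and follows essentially the same route as the paper: translate the hypothesis into $d(\widehat{\Gamma}_p)=\dim(\widehat{\Gamma}_p)$ via the Frattini quotient and the identity $\dim(\widehat{\Gamma}_p)=h(\Gamma)$, then invoke Theorem~\ref{thm:soluble} for $p\geq 5$ and Theorem~\ref{thm:soluble-p-equals-3} for $p=3$. The only difference is that the paper obtains the structural input you sketch (torsion-freeness, finite rank and $\dim(\widehat{\Gamma}_p)=h(\Gamma)$) by citing \cite[Proposition~16.4.2(v)]{LuSe03} rather than re-deriving it via $U_m(\Z_p)$ and a Mal'cev basis.
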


Our current results for insoluble groups are not quite strong enough
to settle Conjecture~\ref{con:infinite} in full generality.  However,
we are able to confirm the conjecture for a wide range of groups.

From~\cite{FeGoJa08} we recall that a subgroup $N$ of a pro-$p$ group
$G$ is \emph{PF-embedded} in $G$ if there exists a central descending
series of closed subgroups $N_i$, $i \in \N$, starting at $N_1 = N$
such that $\bigcap_{i \in \N} N_i = 1$ and
$[N_i,\underbrace{G,\ldots,G}_{p-1}] \leq N_{i+1}^p$ for all $i \in
\N$.

\begin{thm} \label{thm:injective} Suppose that $p\geq 3$. Let $G$ be a
  pro-$p$ group of finite rank with an open PF-embedded subgroup and
  such that the map $G \to G$, $x \mapsto x^p$ is injective.  If $d(G)
  = \dim(G)$ then $G$ is uniform.
\end{thm}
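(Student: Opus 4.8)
The plan is to reduce everything to a statement about the associated $\Z_p$-Lie lattice and to exploit the notion of saturability, which sits between "PF-embedded" and "uniform''. First I would recall from Lazard's theory (in the form presented in work of González-Sánchez and others) that a torsion-free pro-$p$ group of finite rank which possesses an open PF-embedded subgroup is \emph{saturable}; this is essentially the content of the results in~\cite{FeGoJa08}. A saturable pro-$p$ group $G$ carries a functorially attached $\Z_p$-Lie lattice $L = L(G)$ on the same underlying set, with $\dim_{\Z_p} L = \dim(G)$, and the group operations are recovered from the Lie bracket via the Hausdorff series; moreover the $p$-power map on $G$ corresponds to multiplication by $p$ on $L$, so the hypothesis that $x \mapsto x^p$ is injective on $G$ translates into $L$ being a \emph{torsion-free} $\Z_p$-module — which it is automatically — but more usefully it tells us that $p \cdot L$ is a full sublattice and that $G^p$ corresponds to $pL$ as a set. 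The key point is that $G$ is uniform precisely when $L$ is a \emph{powerful} Lie lattice, i.e. $[L,L] \subseteq pL$.

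Next I would set up the generator count. For a saturable $G$ with Lie lattice $L$ one has the Frattini-type identity $G^p[G,G] = \Phi(G)$, and under the correspondence $\Phi(G)$ is the sublattice $pL + [L,L]$ of $L$. Hence $d(G) = \dim_{\F_p}(L / (pL + [L,L]))$, while $\dim(G) = \dim_{\Z_p} L = \dim_{\F_p}(L/pL)$. Therefore the hypothesis $d(G) = \dim(G)$ is equivalent to the equality
\begin{displaymath}
  \dim_{\F_p}\bigl(L/(pL+[L,L])\bigr) = \dim_{\F_p}(L/pL),
\end{displaymath}
which forces $pL + [L,L] = pL$, i.e. $[L,L] \subseteq pL$. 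That is exactly powerfulness of $L$. Running the correspondence backwards, $L$ powerful and torsion-free with $G$ saturable yields that $G$ is powerful, finitely generated and torsion-free, hence uniform. So the argument is: (1) PF-embedded open subgroup $\Rightarrow$ $G$ saturable; (2) pass to $L(G)$ and identify $\Phi(G)$ with $pL+[L,L]$; (3) the generator hypothesis collapses to $[L,L]\subseteq pL$; (4) conclude uniformity.

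The main obstacle, and the step I would spend the most care on, is step (1) together with the precise dictionary in step (2). One has to be sure that "finite rank $+$ torsion-free $+$ open PF-embedded subgroup'' really delivers saturability of $G$ \emph{itself} (not merely of an open subgroup), and that the injectivity of the $p$-power map is genuinely needed — presumably to rule out small torsion phenomena that would break the clean identity $\Phi(G) \leftrightarrow pL+[L,L]$, or to guarantee $G$ is torsion-free so that the saturable structure exists in the first place. A secondary subtlety is justifying $d(G) = \dim_{\F_p}(L/\Phi(G))$ as an identity of $\F_p$-dimensions: this uses that $\Phi(G) = G^p[G,G]$ for finitely generated pro-$p$ groups and the additivity of $\dim$ along the Lie lattice, both standard but worth stating. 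Once the correspondence is in place the computation in step (3) is a one-line dimension count, and step (4) is just the definition of uniform, so essentially all the work is front-loaded into invoking saturability correctly.
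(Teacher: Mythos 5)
There is a genuine gap, and it is exactly at the step you flag as the one needing the most care: step (1). The characterisation you want to invoke (González-Sánchez, cited in the paper) says that a finitely generated pro-$p$ group is saturable if and only if it is a torsion-free \emph{PF-group}, i.e.\ $G$ itself admits a potent filtration starting at $G$. The hypothesis of the theorem is strictly weaker: $G$ merely contains an \emph{open} PF-embedded subgroup $N$, and this does not make $G$ itself PF-embedded in itself. Indeed, the theorem is engineered precisely to cover groups that are \emph{not} saturable: the paper deduces from it that every closed subgroup of a finitely generated saturable group with $d=\dim$ is uniform, and such subgroups have open PF-embedded subgroups and injective $p$-power maps but are in general not saturable themselves. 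If your step (1) were true, the theorem would collapse to the already-known saturable case and the separate hypothesis that $x\mapsto x^p$ is injective would be redundant (it is automatic for saturable groups); the fact that it is listed as an independent assumption is a signal that no saturable structure on $G$ is available. So the Lie-lattice dictionary in steps (2)--(4) has nothing to attach to, and the argument does not get off the ground. (For honestly saturable $G$ your dimension count $\dim_{\F_p}(L/(pL+[L,L]))=\dim_{\F_p}(L/pL)\Rightarrow[L,L]\subseteq pL$ is the right idea and mirrors the group-theoretic count $\lvert G:\Phi(G)\rvert=\lvert G:G^p\rvert$, though even there the identification of $\Phi(G)$ with $pL+[L,L]$ needs a citation rather than being automatic.)

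What the paper actually does is work directly with the open PF-embedded subgroup $N$ and its potent filtration $(N_i)$. Using P.~Hall's collection formula together with the filtration conditions one shows $(xw)^p\equiv x^pw^p \bmod N_{i+1}^p$ for $x\in G$, $w\in N_i$, and an inductive limit argument upgrades this to the exact statement $(xN)^{\{p\}}=x^pN^p$ for every coset $xN$. The injectivity of $x\mapsto x^p$ then makes $xN\mapsto x^pN^p$ a bijection between the cosets of $N$ and their images, so a Haar-measure computation gives $\mu(G^{\{p\}})=p^{-\dim(G)}$. Combined with $p^{d(G)}=\lvert G:G^p[G,G]\rvert\leq\lvert G:G^p\rvert\leq(\mu(G^{\{p\}}))^{-1}=p^{\dim(G)}=p^{d(G)}$ this forces $[G,G]\subseteq G^p$. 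To repair your proof you would either need to supply this coset/measure argument or prove (which you cannot in general) that the hypotheses force $G$ to be saturable.
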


It is not hard to see that every closed subgroup of a finitely
generated saturable pro-$p$ group has an open PF-embedded
subgroup. Since it is one of the built-in features of a saturable
pro-$p$ group $G$ that the map $G \to G$, $x \mapsto x^p$ is
injective, we obtain immediately the following corollary.

\begin{cor} \label{cor:sat-un} Suppose that $p\geq 3$ and let $G$ be a
  closed subgroup of a finitely generated saturable pro-$p$ group. If
  $d(G) = \dim(G)$ then $G$ is uniform.
\end{cor}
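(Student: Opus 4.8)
The plan is to obtain Corollary~\ref{cor:sat-un} as an immediate consequence of Theorem~\ref{thm:injective}. So let $S$ be a finitely generated saturable pro-$p$ group and let $G\le S$ be a closed subgroup with $d(G)=\dim(G)$; it suffices to check that $G$ meets the three hypotheses of Theorem~\ref{thm:injective} --- finite rank, an open PF-embedded subgroup, and injectivity of the $p$-power map --- and then to quote that theorem. Since $S$ is saturable it is $p$-adic analytic, hence of finite rank, and therefore so is its closed subgroup $G$; in particular $\dim(G)$ is defined. Moreover the map $x\mapsto x^{p}$ is injective on $S$ --- one of the built-in features of a saturable pro-$p$ group, immediate from the relation $\omega(x^{p})=\omega(x)+1$ for a saturated $p$-valuation $\omega$ --- and its restriction to $G$ is injective \emph{a fortiori}.

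It remains to produce an open PF-embedded subgroup of $G$, which is exactly the remark recorded just before the statement; one could simply cite it. To see why it holds, note that $G$ --- or at any rate its isolator $\bar G$ in $S$ --- is again saturable (this is part of the structure theory of finitely generated saturable pro-$p$ groups and is what underlies that remark), and that a saturable pro-$p$ group $H$ is PF-embedded in itself. Indeed, let $\omega$ be a saturated $p$-valuation on $H$ and set $\omega_{0}:=\min\{\omega(h):1\neq h\in H\}$; then $\omega_{0}>1/(p-1)$, so that $\varepsilon:=\min\{\omega_{0},\,(p-1)\omega_{0}-1\}$ is positive, and the subgroups $N_{i}:=\{h\in H:\omega(h)\ge\omega_{0}+(i-1)\varepsilon\}$ form a descending series of closed normal subgroups with $N_{1}=H$, $[N_{i},H]\le N_{i+1}$ and $\bigcap_{i}N_{i}=1$. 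Using $\omega([x,y])\ge\omega(x)+\omega(y)$ one finds $\omega(c)\ge p\omega_{0}>1+1/(p-1)$ for every $c\in[N_{i},\underbrace{H,\dots,H}_{p-1}]$, whence by saturatedness $c$ has a $p$-th root in $N_{i+1}$, and therefore $[N_{i},\underbrace{H,\dots,H}_{p-1}]\le N_{i+1}^{p}$. Taking $H=\bar G$ and passing, if need be, to a term of the series that lies inside $G$ (legitimate because $G$ is open in $\bar G$) gives an open subgroup of $G$ that is PF-embedded in $G$.

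With the three hypotheses in hand, Theorem~\ref{thm:injective} applies to $G$ and yields that $G$ is uniform, as claimed. The argument is therefore entirely a matter of transferring the hypotheses of Theorem~\ref{thm:injective} to the closed subgroup $G$; the only ingredient of any substance is the existence of a PF-embedded open subgroup in $G$, which is the quoted remark, so there is no real obstacle to surmount here --- all the content of the corollary is carried by Theorem~\ref{thm:injective}.
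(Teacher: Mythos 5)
Your proposal is correct and follows the same overall route as the paper: both reduce the corollary to Theorem~\ref{thm:injective}, get injectivity of $x \mapsto x^p$ from saturability of the ambient group $S$, and obtain the open PF-embedded subgroup by first passing to a saturable group containing $G$ as an open subgroup (your isolator $\bar G$ is the paper's saturable closure $\sat_S(G)$). The one place where you genuinely diverge is in producing the potent filtration itself: the paper simply quotes Gonz\'alez-S\'anchez's theorem that a saturable group is a PF-group and then applies \cite[Proposition~3.2(iii)]{FeGoJa08} to push the filtration of $\sat_S(G)$ down to the subgroup $S^{p^k} \subseteq G$, whereas you construct the filtration explicitly from a saturated $p$-valuation via $N_i = \{h : \omega(h) \geq \omega_0 + (i-1)\varepsilon\}$ with $\varepsilon = \min\{\omega_0, (p-1)\omega_0 - 1\}$ and then pass to a term contained in $G$ (legitimate by compactness, since $\bigcap_i N_i = 1$ and $G$ is open). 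Your construction checks out --- the choice of $\varepsilon$ is exactly what is needed so that the $p$-th root of an element of $[N_i, H, \ldots, H]$, supplied by saturation since $p\omega_0 > p/(p-1)$, lands in $N_{i+1}$ --- and it has the merit of making the key ingredient self-contained rather than cited; the cost is that it re-proves a special case of the result from \cite{Go07} that the paper leans on. One small point worth spelling out: $[N_i, H, \ldots, H] \leq N_{i+1}^p$ as subgroups, not merely on generators, but this is harmless because $N_{i+1}^p = \{h : \omega(h) \geq \omega_0 + i\varepsilon + 1\}$ is itself a closed subgroup.
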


We recall that every uniform pro-$p$ group is saturable, albeit the
converse is not true; see~\cite{Kl05}.  The next corollary can be used
to produce easily examples of saturable groups which are not
uniform.

\begin{cor} \label{cor:char-sat-uniform} Suppose that $p \geq 3$ and
  let $G$ be a finitely generated saturable pro-$p$ group. Then $G$ is
  uniform if and only if $d(G)=\dim(G)$.
\end{cor}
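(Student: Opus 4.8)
The plan is to deduce the corollary directly from Corollary~\ref{cor:sat-un} together with the standard fact, recalled in Section~\ref{sec:main-results}, that every uniform pro-$p$ group $G$ satisfies $d(G) = \dim(G)$.

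For the forward implication I would argue as follows. Suppose that $G$ is uniform. Then $G$ is a uniform open subgroup of itself, and by the algebraic description of the dimension of a $p$-adic analytic group --- namely $\dim(G) = d(U)$ for any uniform open pro-$p$ subgroup $U \leq G$ --- taking $U = G$ gives $\dim(G) = d(G)$.

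For the reverse implication, suppose that $d(G) = \dim(G)$. Since $G$ is a finitely generated saturable pro-$p$ group, it is in particular a closed subgroup of the finitely generated saturable pro-$p$ group $G$ itself, so Corollary~\ref{cor:sat-un} applies and yields that $G$ is uniform.

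I do not expect a genuine obstacle: once Corollary~\ref{cor:sat-un} is available, the present statement is just the specialisation obtained by letting the ambient saturable group coincide with $G$, combined with the elementary direction for uniform groups. The only point worth making explicit is that uniform pro-$p$ groups are themselves saturable (see~\cite{Kl05}), so that both halves of the equivalence live naturally within the class of saturable groups; for the proof proper, however, only the numerical criterion $d(G) = \dim(G)$ is used.
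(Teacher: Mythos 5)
Your argument is correct and coincides with the paper's: the authors likewise note that Corollary~\ref{cor:char-sat-uniform} is a direct consequence of Corollary~\ref{cor:sat-un} (applied with the ambient saturable group equal to $G$ itself), the forward implication being the standard fact $\dim(G)=d(G)$ for uniform $G$.
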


Furthermore, a result of Gonz\'alez-S\'anchez and Klopsch
in~\cite{GoKl09} allows us to derive the following consequence.

\begin{cor} \label{cor:large-p} Let $G$ be a torsion-free pro-$p$
  group of finite rank such that $p > \dim(G)$.  If $d(G) = \dim(G)$
  then $G$ is uniform.
\end{cor}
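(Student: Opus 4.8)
The plan is to reduce the statement to Corollary~\ref{cor:char-sat-uniform} by using the cited result of González-Sánchez and Klopsch to upgrade $G$ to a finitely generated saturable pro-$p$ group.

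First I would dispose of small cases. Since $G$ has finite rank and $d(G) \leq \rank(G)$, the group $G$ is finitely generated; in particular it is a torsion-free compact $p$-adic analytic group and $\dim(G)$ is its dimension as a $p$-adic manifold. If $\dim(G) = 0$, then $G$ is finite and torsion-free, hence trivial and (vacuously) uniform. If $p = 2$, then the hypothesis $p > \dim(G)$ forces $\dim(G) \leq 1$, so $G$ is either trivial or isomorphic to $\Z_2$, and in both cases $G$ is uniform. Thus we may assume henceforth that $p \geq 3$ and $\dim(G) \geq 1$.

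The key input is the theorem of González-Sánchez and Klopsch in~\cite{GoKl09}, according to which every torsion-free $p$-adic analytic pro-$p$ group of dimension strictly less than $p$ is saturable. As $\dim(G) < p$ by hypothesis, this applies to $G$ and shows that $G$ is saturable. Combined with the finite generation established above, $G$ is a finitely generated saturable pro-$p$ group.

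Finally, I would invoke Corollary~\ref{cor:char-sat-uniform}: for $p \geq 3$, a finitely generated saturable pro-$p$ group $G$ is uniform if and only if $d(G) = \dim(G)$. Since $d(G) = \dim(G)$ holds by assumption, $G$ is uniform, which completes the proof. The only step carrying genuine content is the appeal to~\cite{GoKl09}; everything else is bookkeeping, and the one point that requires care is to verify that the hypotheses of that result — torsion-freeness, $p$-adic analyticity (equivalently, finite rank), and the dimension bound $\dim(G) < p$ — are all met in the present situation.
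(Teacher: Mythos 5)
Your proposal is correct and follows exactly the paper's own argument: invoke the González-Sánchez--Klopsch result that torsion-free $p$-adic analytic pro-$p$ groups of dimension less than $p$ are saturable, then apply Corollary~\ref{cor:char-sat-uniform}. The extra bookkeeping for $p=2$ and $\dim(G)=0$ is harmless but not needed beyond what the paper already implicitly handles.
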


Finally, it is natural to ask whether there is an analogue of
Conjecture~\ref{con:infinite} for finite $p$-groups.  Currently, there
is perhaps not enough evidence to support a `formal' conjecture, but
we can raise the following problem.  For a finite $p$-group $G$ the
subgroup $\Omega_1(G)$ is the group generated by all elements of order
$p$ in~$G$.

\begin{question} \label{con:finite} Suppose that $p \geq 3$ and let
  $G$ be a finite $p$-group.  Is it true that $G$ is powerful if and
  only if $d(G) = \log_p \lvert \Omega_1(G) \rvert$?
\end{question}

Similarly as for Conjecture~\ref{con:infinite}, the forward
implication in Question~\ref{con:finite} is known to be true.  Suppose
that $p \geq 3$ and let $G$ be a powerful finite $p$-group.  Then
$\lvert G : G^p \rvert = \lvert \Omega_1(G) \rvert$, by
\cite[Theorem~3.1]{Wi02}, and from $[G,G] \subseteq G^p$ we conclude
that $d(G) = \lvert G:G^p \rvert = \lvert \Omega_1(G) \rvert$.  The
actual problem is whether the implication in the other direction is
true.  Our next result shows that a positive answer to this question
would resolve Conjecture~\ref{con:infinite}.

\begin{pro} \label{pro:qu-implies-conj} Conjecture~\ref{con:infinite}
  is true if the answer to Question~\ref{con:finite} is `yes'.
\end{pro}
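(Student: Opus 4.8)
The plan is to reduce Conjecture~\ref{con:infinite} to a statement about finite quotients and then feed that statement into a positive answer to Question~\ref{con:finite}. So suppose the answer to Question~\ref{con:finite} is `yes', and let $G$ be a torsion-free pro-$p$ group of finite rank with $p \geq 3$ and $d(G) = \dim(G)$; we must show $G$ is uniform. Since the forward implication of Conjecture~\ref{con:infinite} is already known, it suffices to show $G$ is powerful, i.e.\ that $[G,G] \leq G^p$. The idea is to test this on a cofinal family of open normal subgroups: $[G,G] \leq G^p$ holds if and only if $[G,G]N/N \leq (G/N)^p N/N$ for every $N$ in a base of neighbourhoods of the identity consisting of open normal subgroups, equivalently if and only if $G/N$ is powerful for all such $N$ in a suitable cofinal family. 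So the crux is to produce, for a cofinal family of open normal subgroups $N \trianglelefteq_o G$, finite $p$-group quotients $G/N$ to which Question~\ref{con:finite} applies and which come out powerful.

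The key computation is to match up the three invariants $d(\cdot)$, $\dim(\cdot)$ and $\log_p\lvert\Omega_1(\cdot)\rvert$ across the limit. First, $d(G/N) \leq d(G) = \dim(G)$ always, and for $N$ contained in the Frattini subgroup $\Phi(G)$ one has $d(G/N) = d(G)$. Second, I would like to choose $N$ so that $\log_p\lvert\Omega_1(G/N)\rvert \leq \dim(G)$, and moreover so that equality is forced. Here the hypotheses that $G$ is torsion-free and of finite rank enter: one can take $N$ ranging over a descending chain of open \emph{uniform} normal subgroups $U_n = G_n$ (say the lower $p$-series terms of a uniform open subgroup, intersected appropriately to be normal in $G$, or a filtration coming from a uniform open normal subgroup), so that $\lvert G : U_n\rvert \to \infty$ while $\dim(U_n) = \dim(G)$. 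For such $N = U_n$ one shows that the elements of order dividing $p$ in $G/U_n$ all lie in the image of a fixed uniform open subgroup, and a counting argument using powerfulness and torsion-freeness of $U_n$ bounds $\lvert\Omega_1(G/U_n)\rvert$ by $p^{\dim(G)}$; combined with $d(G/U_n) = \dim(G) = d(G)$ for $n$ large (so that $U_n \leq \Phi(G)$) and the general inequality $d(H) \leq \log_p\lvert\Omega_1(H)\rvert$ valid for any finite $p$-group $H$ with $p\ge 3$ — this last point needs to be checked, but follows because $\Omega_1(H)$ surjects onto $H/\Phi(H)$ — we deduce $d(G/U_n) = \log_p\lvert\Omega_1(G/U_n)\rvert$. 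Then Question~\ref{con:finite} (answered affirmatively) yields that $G/U_n$ is powerful for all large~$n$. Since the $U_n$ form a base of neighbourhoods of~$1$, passing to the inverse limit gives $[G,G] \leq G^p$, so $G$ is powerful, and being also torsion-free and finitely generated it is uniform.

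The main obstacle I anticipate is the middle step: establishing the inequality $\log_p\lvert\Omega_1(G/N)\rvert \leq \dim(G)$ for a cofinal family of~$N$, with equality, so that Question~\ref{con:finite} can be invoked. One has to be careful that $\Omega_1$ of a finite quotient can be strictly larger than the image of $\Omega_1$ of an open subgroup, so the bound cannot be read off naively; the argument must exploit that $G$ is torsion-free of finite rank and that $N$ is chosen uniform (or at least powerful and `deep' in~$G$), presumably via the structure theory of uniform groups — e.g.\ that in a uniform group $U$ the map $x \mapsto x^p$ is a bijection onto $U^p = \Phi(U)$ — to control which cosets $gN$ satisfy $g^p \in N$. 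There is also a minor technical point in arranging the $U_n$ to be \emph{normal} in $G$ (not merely in some open subgroup), which one handles by intersecting over the finitely many $G$-conjugates, or by working with the isolators/saturations, and then checking the resulting subgroups are still suitable (finite index, intersection trivial, and the $\Omega_1$-bound still holds). The rest — the two-sided estimate on $d(G/N)$ and the limit argument — is routine.
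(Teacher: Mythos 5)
Your overall strategy is the same as the paper's: take a uniform open normal subgroup $U$, use the cofinal family $U^{p^n}$ of open normal subgroups, show that for large $n$ the finite quotient $G/U^{p^n}$ satisfies $d(G/U^{p^n}) = \log_p\lvert\Omega_1(G/U^{p^n})\rvert$, invoke the positive answer to Question~\ref{con:finite} to conclude these quotients are powerful, and pass to the inverse limit. The step you correctly identify as the main obstacle --- showing every $x$ with $x^p \in U^{p^n}$ already lies in $U^{p^{n-1}}$, so that $\Omega_1(G/U^{p^n}) = U^{p^{n-1}}/U^{p^n}$ has order exactly $p^{\dim(G)}$ --- is handled in the paper by Lemma~\ref{lem:uniform-by-cyclic}, whose proof uses exactly the ingredients you name (torsion-freeness of $G$, and the $\Z_p\langle z\rangle$-module structure of a uniform group on which the $p$-power map is well controlled). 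So on that point your sketch is on track.

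There is, however, one genuinely wrong justification. You assert that $d(H) \leq \log_p\lvert\Omega_1(H)\rvert$ for every finite $p$-group $H$ with $p \geq 3$ ``because $\Omega_1(H)$ surjects onto $H/\Phi(H)$''. That surjectivity claim is false: already for $H = C_{p^2}$ the subgroup $\Omega_1(H)$ equals $\Phi(H)$ and has trivial image in $H/\Phi(H)$, and more generally in any powerful $p$-group of exponent $\geq p^2$ one has $\Omega_1(H) \leq H^p \leq \Phi(H)$. The inequality $d(H) \leq \log_p\lvert\Omega_1(H)\rvert$ for odd $p$ is nevertheless true, but it is a nontrivial theorem of Laffey (\cite[Corollary~2]{Laf73}), which is precisely what the paper cites at this point. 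So your argument is repaired by replacing the bogus justification with that citation; without it, the chain of (in)equalities forcing $d(G/U^{p^n}) = \log_p\lvert\Omega_1(G/U^{p^n})\rvert$ does not close. A final minor remark: your worry about arranging the $U_n$ to be normal in $G$ is easily dispatched --- take $U$ to be a uniform open \emph{normal} subgroup of $G$ (these exist in any pro-$p$ group of finite rank), and then each $U^{p^n}$ is characteristic in $U$, hence normal in $G$.
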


Section~\ref{sec:uniform} contains a variation of
Question~\ref{con:finite} and a short discussion, showing that the
answer is indeed `yes' for certain special classes of groups, namely
for finite $p$-groups which are regular, potent or $p$-central.

\subsection{Some applications} \label{sec:applications} The property
of being powerful is readily inherited by factor groups and by direct
products, but straightforward examples show that often it is not
inherited by subgroups.  We say that a pro-$p$ group $G$ is
\emph{hereditarily powerful} if every open subgroup of $G$ is
powerful.  Similarly, we say that $G$ is \emph{hereditarily uniform}
if every open subgroup of $G$ is uniform.  In \cite{LuMa87I} Lubotzky
and Mann proved that a finite $p$-group is hereditarily powerful if
and only if it is modular and, if $p=2$, not Hamiltonian.  From their
result we deduce the following.

\begin{thm}[Lubotzky and Mann] \label{thm:her-powerful} Let $G$ be a
  finitely generated pro-$p$ group.  Then $G$ is hereditarily powerful
  if and only if there exist an abelian normal subgroup $A$ of $G$, an
  element $b \in G$ and $s \in \N \cup \{ \infty \}$, with $s \geq 2$
  if $p=2$, such that
  \begin{displaymath}
    G = \langle b \rangle A,   
  \end{displaymath}
  where the group $A$ is written additively and $b$ acts as
  multiplication by $1+p^s$.
\end{thm}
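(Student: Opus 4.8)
The plan is to bootstrap both implications from the corresponding statement for finite $p$-groups, which is essentially the content of \cite{LuMa87I}: a finite $p$-group $F$ is hereditarily powerful if and only if $F = \langle b \rangle A$ for some abelian normal subgroup $A$, some $b \in F$ and some $s \in \N \cup \{\infty\}$ with $s \geq 2$ if $p = 2$, where $b$ acts on $A$ by $x \mapsto x^{1+p^s}$. (This follows by combining Lubotzky and Mann's characterisation of the hereditarily powerful finite $p$-groups as the modular, non-Hamiltonian ones with Iwasawa's structure theorem for modular $p$-groups.)

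For the ``if'' direction I would argue directly. Given $G = \overline{\langle b \rangle} A$ as above and an open subgroup $H \leq G$, note that $A \cap H$ is an open -- hence abelian normal -- subgroup of $H$ and that $H/(A\cap H) \cong HA/A$ embeds in the procyclic group $G/A$, so it is procyclic. Picking $h \in H$ mapping to a topological generator, so that $H = \overline{\langle h\rangle}(A\cap H)$, and writing $h = b^m a_0$ with $a_0 \in A$, the element $h$ acts on $A \cap H$ by $x \mapsto x^{(1+p^s)^m}$. An elementary $p$-adic valuation estimate gives $v_p\bigl((1+p^s)^m - 1\bigr) \geq s$ (with the usual care when $p=2$, where $s \geq 2$ is used). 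Since the commutators $[h^n, a]$ with $a \in A \cap H$ then all lie in $(A\cap H)^{p^s}$, while commutators within $\overline{\langle h \rangle}$ or within $A\cap H$ vanish, one obtains $\overline{[H,H]} \leq (A\cap H)^{p^s} \leq (A\cap H)^p \leq H^p$, respectively $\leq H^4$ when $p = 2$; hence $H$ is powerful. (The case $H = G$ shows in particular that $G$ itself is powerful.)

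For the ``only if'' direction, assume $G$ is hereditarily powerful. Then $G$ is powerful and finitely generated, hence of finite rank, and every finite quotient $G/N$ ($N$ open normal in $G$) is again hereditarily powerful, since each of its subgroups is of the form $H/N$ with $H \leq G$ open, hence powerful, so that $H/N$ is powerful as a quotient of a powerful group. For each $N$ let $W_N$ be the set of pairs $(\bar A, \bar b)$ witnessing the finite characterisation for $G/N$, i.e. $\bar A \trianglelefteq G/N$ abelian, $G/N = \langle \bar b\rangle \bar A$, and $\bar b$ acting on $\bar A$ by $x \mapsto x^{1+p^s}$ for some admissible $s$. Each $W_N$ is finite and, by the finite case, nonempty; applying the projection $G/N \twoheadrightarrow G/N'$ to a witnessing pair yields a witnessing pair for $G/N'$, as all the relevant properties pass to homomorphic images, so the $W_N$ form an inverse system of nonempty finite sets. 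By compactness $\varprojlim W_N \neq \emptyset$, and a compatible family $(\bar A_N, \bar b_N)$ produces a closed abelian normal subgroup $A := \varprojlim \bar A_N \leq G$ and an element $b \in G$ with image $\bar b_N$ in each $G/N$; since $G/A \cong \varprojlim (G/N)/\bar A_N$ is an inverse limit of cyclic $p$-groups, it is procyclic with topological generator $bA$, and therefore $G = \overline{\langle b\rangle} A$.

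The remaining, and \emph{main}, point is to show that conjugation by $b$ acts on $A$ as multiplication by a single $1+p^s$ with $s \in \N \cup \{\infty\}$, $s \geq 2$ if $p=2$. Let $\alpha \in \mathrm{Aut}(A)$ be this action; by construction it reduces modulo every open subgroup of $A$ to multiplication by some $1 + p^{s_N}$ with $s_N \geq 1$ (and $\geq 2$ when $p = 2$). Passing to the limit gives $\alpha(a) \in \overline{\langle a\rangle}$ for all $a \in A$, so $\alpha$ is a power automorphism of the finite-rank abelian pro-$p$ group $A \cong \Z_p^{\,r} \oplus T$ with $T$ finite. A short argument shows that a power automorphism is multiplication by a fixed unit $u \in \Z_p^{\times}$ on all of $A$ as soon as $r \geq 1$; if $r = 0$ then $A$ is finite and one may assume $b$ has infinite order (otherwise $G$ is finite and we are done by the finite case), whereupon choosing $N$ with $N \cap A = 1$ exhibits $\alpha$ directly as multiplication by $1 + p^{s_N}$ on $A$. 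In the first case, comparing $u$ with its reductions $1 + p^{s_N}$ modulo quotients of $A$ of unbounded exponent forces $u - 1$ to be either $0$ or a power $p^s$; either way $\alpha$ is multiplication by $1 + p^s$ for a single $s \in \N \cup \{\infty\}$, and $s \geq 2$ when $p = 2$ since every $s_N \geq 2$. The principal obstacle is precisely this transition from the local data -- scalarity of the action modulo each $N$ -- to a single global scalar: the compactness device secures a compatible choice of $A$ and $b$, while the power-automorphism observation together with the bookkeeping for the torsion of $A$ pins down the exponent $s$.
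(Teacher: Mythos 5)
Your proof is correct and follows the same basic strategy as the paper: reduce to the finite quotients, invoke Lubotzky--Mann together with Iwasawa's classification of modular $p$-groups to get witnessing data for each $G/N$, and glue by a compactness argument on an inverse system of non-empty finite sets. The one genuine point of divergence is how the single exponent $s$ is obtained. The paper includes $s$ in the witnessing triples $(A,b,s)$, truncating $s$ to the range $\{1,\dots,\log_p\lvert G/N\rvert\}$ so that each set stays finite; a compatible family then hands over a single $s\in\N\cup\{\infty\}$ for free. You instead run the compactness argument only on the pairs $(\bar A,\bar b)$ and recover a global $s$ afterwards, by observing that the limit action is a power automorphism of $A\cong\Z_p^{\,r}\oplus T$ and comparing the resulting unit $u\in\Z_p^\times$ with its reductions $1+p^{s_N}$ modulo quotients of unbounded exponent. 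Both routes work; the paper's bookkeeping is slightly slicker, while your version isolates more explicitly where the uniformity of $s$ actually comes from (and correctly flags the degenerate cases $u=1$, i.e.\ $s=\infty$, and $r=0$). You also write out the ``if'' direction in full, which the paper dismisses as straightforward; your valuation estimate $v_p\bigl((1+p^s)^m-1\bigr)\geq s$ for $m\in\Z_p$, with the $s\geq 2$ proviso at $p=2$, is exactly what is needed there.
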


\begin{cor} \label{cor:hered-uniform} Let $G$ be a finitely generated
  pro-$p$ group. Then $G$ is hereditarily uniform if and only if one
  of the following holds:
  \begin{enumerate}
  \item $G \cong \Z_p^d$ is abelian for some $d \in \{0 \} \cup \N$;
  \item $G \cong \langle b \rangle \ltimes A$ for $\langle b \rangle
    \cong \Z_p$ and $A \cong \Z_p^{d-1}$, where $d \geq 2$ and $b$
    acts on $A$ as multiplication by $1+p^s$ for some $s \in \N$, with
    $s \geq 2$, if $p=2$.
  \end{enumerate}
\end{cor}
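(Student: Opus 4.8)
The plan is to derive Corollary~\ref{cor:hered-uniform} from Theorem~\ref{thm:her-powerful} by combining the powerful structure it provides with the torsion-freeness forced by asking that \emph{every} open subgroup be uniform. First I would dispose of the easy direction: if $G \cong \Z_p^d$ then every open subgroup is again isomorphic to $\Z_p^d$, hence uniform; and if $G \cong \langle b\rangle \ltimes A$ as in case~(2), one checks directly that $G$ is torsion-free and powerful---for $p$ odd this is automatic from $s \geq 1$, and the requirement $s \geq 2$ when $p=2$ is exactly what makes $1+p^s \equiv 1 \pmod 4$, so that $[G,G] \leq G^4$---and then one verifies that an arbitrary open subgroup $H$ has the same shape (intersecting $H$ with $A$ gives a finite-index subgroup of $A$, still $\cong \Z_p^{d-1}$, and a suitable generator of $H/(H\cap A)$ acts by $1+p^{s'}$ for some $s' \geq s$), so $H$ is uniform by the same computation. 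Hence both (1) and (2) give hereditarily uniform groups.

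For the forward direction, suppose $G$ is hereditarily uniform. Then in particular $G$ is uniform, hence torsion-free, and $G$ is hereditarily powerful, so Theorem~\ref{thm:her-powerful} applies: there are an abelian normal $A \trianglelefteq G$, an element $b \in G$, and $s \in \N \cup \{\infty\}$ (with $s \geq 2$ if $p=2$) such that $G = \langle b\rangle A$ with $b$ acting on the additively written $A$ as multiplication by $1+p^s$. Since $G$ is torsion-free and $A$ is an abelian pro-$p$ group of finite rank, $A \cong \Z_p^{k}$ for some $k \geq 0$. If $s = \infty$, the action is trivial, so $G = \langle b\rangle \times A$ is abelian; being torsion-free of finite rank it is $\Z_p^d$, which is case~(1). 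If $s \in \N$, I would split according to whether $b$ has finite or infinite order modulo $A$. The key point is that $G/A$ is procyclic: if $b^{p^m} \in A$ for some $m$, then replacing $b$ by a generator one sees $G$ is generated by $A$ together with a single element of finite order modulo $A$; I would argue that torsion-freeness then forces, after adjusting $b$ within its coset, either that the action becomes trivial (reducing to the abelian case) or a contradiction with $G$ being torsion-free, because a lift of a nontrivial finite-order automorphism of $\Z_p^k$ given by a scalar $1+p^s \neq 1$ cannot be made to have infinite order compatibly. Thus $\langle b\rangle \cap A = 1$ and $\langle b\rangle \cong \Z_p$, giving $G \cong \Z_p \ltimes \Z_p^{\,d-1}$ with $b$ acting as $1+p^s$; the constraint $s \geq 1$ (and $s \geq 2$ for $p=2$) is inherited from the theorem, yielding case~(2).

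The main obstacle is the bookkeeping in the forward direction around the structure of $G/A$ and the normalisation of the acting element $b$: Theorem~\ref{thm:her-powerful} only guarantees $G = \langle b\rangle A$, not that the extension splits or that $\langle b\rangle \cap A = 1$, so I must rule out a nonsplit or "twisted" configuration by invoking torsion-freeness, and I must be careful that the value of $s$ is genuinely an invariant of the isomorphism type (different choices of $b$ within $G$ could a priori give different $s$, but the condition $G/G^p$ together with the action on $A/pA$ pins it down up to the stated ambiguity). A secondary point worth stating cleanly is why, once the shape $\Z_p \ltimes \Z_p^{d-1}$ with scalar action $1+p^s$ is reached, the group really is hereditarily uniform and not merely uniform---this is the open-subgroup verification mentioned above, and it is routine but should be written out, since it is precisely the content that distinguishes "uniform" from "hereditarily uniform" and mirrors the Lubotzky--Mann classification in the finite case.
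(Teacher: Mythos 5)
Your proposal is correct and follows essentially the same route as the paper: reduce via Theorem~\ref{thm:uniform-iff-pow-tf} to ``hereditarily powerful and torsion-free'', invoke Theorem~\ref{thm:her-powerful}, and then use torsion-freeness of $A$ (so that the scalar $1+p^s\neq 1$ acts with infinite order, forcing any element of $\langle b\rangle\cap A$ to be trivial) to get the splitting $G=\langle b\rangle\ltimes A$ in the non-abelian case. Your phrasing of that last step (``a lift of a nontrivial finite-order automorphism\dots'') is muddled---the automorphism $1+p^s$ has \emph{infinite} order, and the contradiction is that a power of $b$ lying in $A$ would force it to have finite order---but the underlying argument is exactly the paper's one-line justification that $\langle b\rangle\cap A=1$ because $A$ has elements of infinite order.
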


In~\cite{KlSn11} the authors of the present paper used Lie ring
methods to classify all finitely generated pro-$p$ groups with
constant generating number on open subgroups; see Theorem~\ref{thm:Kl-Sn}.
In Section~\ref{sec:hereditarily} we indicate how
Corollary~\ref{cor:hered-uniform} yields an alternative proof in the
case $p \geq 3$, which does not require Lie ring techniques.
  
\subsection{Notation}
Throughout the paper, $p$ denotes a prime. The $p$-adic integers and
$p$-adic numbers are denoted by $\Z_p$ and $\Q_p$.  We write $C_p$ to
refer to a cyclic group of order~$p$.

Subgroups $H$ of a topological group $G$ are tacitly taken to be
closed and by generators we mean topological generators as
appropriate.  The minimal number of generators of a group $G$ is
denoted by $d(G)$.  Likewise the minimal cardinality of a generating
set for a module $M$ over a ring $R$ is denoted by $d_R(M)$. 

A pro-$p$ group $G$ is said to be \emph{just-infinite} if it is
infinite and every non-trivial normal subgroup of $G$ has finite index
in~$G$.  Precise descriptions of uniform and saturable pro-$p$
groups are given in Section~\ref{sec:uniform}.
 
%%%%%

\section{Uniform pro-$p$ groups} \label{sec:uniform} 

\subsection{} Let $G$ be a pro-$p$ group.  The lower central
$p$-series of $G$ is defined as follows: $P_1(G)=G$ and $P_{i+1}(G) =
{P_i(G)}^p [P_i(G), G]$ for $i \in \N$.  We recall the definition of a
uniformly powerful pro-$p$ group.

\begin{definition} \label{def:uniform} A pro-$p$ group $G$ is
  \emph{uniformly powerful}, or \emph{uniform} for short, if
  \begin{itemize}
  \item[(i)] $G$ is finitely generated;
  \item[(ii)] $G$ is powerful, i.e., $[G,G] \subseteq G^p$ if $p \geq
    3$, and $[G,G] \subseteq G^4$ if $p=2$;
  \item[(iii)] $\lvert P_i(G) : P_{i+1}(G) \rvert = \lvert G : P_2(G)
    \rvert$ for all $i \in \N$.
 \end{itemize}
\end{definition}

A useful characterisation of uniform pro-$p$ groups is the following.

\begin{thm}[{\cite[Theorem~4.5]{DidSMaSe99}}] \label{thm:uniform-iff-pow-tf}
  A pro-$p$ group is uniform if and only if it is finitely generated,
  torsion-free and powerful.
\end{thm}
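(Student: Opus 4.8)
The plan is to analyse the lower $p$-series $P_i = P_i(G)$ together with the maps induced on its successive factors by $p$th powers, relying on the standard structure theory of finitely generated powerful pro-$p$ groups. For such a group $G$ one has $P_{i+1} = P_i^p = \{x^p : x \in P_i\} = \Phi(P_i)$, each $P_i$ is again powerful, $\bigcap_i P_i = 1$, the commutator inclusions $[P_a(G),P_b(G)] \subseteq P_{a+b}(G)$ hold, and --- crucially --- the $p$th power map induces a well-defined epimorphism $\phi_i \colon P_i/P_{i+1} \twoheadrightarrow P_{i+1}/P_{i+2}$ of elementary abelian $p$-groups. Hence $d_i := \dim_{\F_p} P_i/P_{i+1} = d(P_i)$ is a non-increasing sequence with $d_1 = d(G)$, and condition~(iii) of Definition~\ref{def:uniform} is exactly the statement that $(d_i)$ is constant, equivalently that every $\phi_i$ is an isomorphism, equivalently that every $\phi_i$ is injective.

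With this in hand the forward implication is immediate apart from torsion-freeness, which is part of neither ``finitely generated'' nor ``powerful''. If $G$ had a nontrivial element of finite order it would have one, $x$ say, of order exactly $p$; choosing $j$ with $x \in P_j \setminus P_{j+1}$ --- possible since $\bigcap_i P_i = 1$ --- the nonzero class of $x$ in $P_j/P_{j+1}$ is killed by $\phi_j$, contradicting the injectivity of $\phi_j$ that uniformity provides.

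For the converse, suppose $G$ is finitely generated, powerful and torsion-free but, seeking a contradiction, not uniform. Then $(d_i)$ is not constant, so $d_i > d_{i+1}$ for some $i$ and $\phi_i$ has nontrivial kernel: there is $x \in P_i \setminus P_{i+1}$ with $x^p \in P_{i+2}$. I would then show by induction on $n \ge i+2$ that there is $x_n \in P_i \setminus P_{i+1}$ with $x_n^{\,p} \in P_n$. For the inductive step, use surjectivity of $\phi_{n-1}$ to pick $y \in P_{n-1} \subseteq P_{i+1}$ with $y^p \equiv x_n^{\,p} \pmod{P_{n+1}}$ and put $x_{n+1} = x_n y^{-1}$; since $y \in P_{i+1}$ this still lies in $P_i \setminus P_{i+1}$, and the Hall--Petrescu collection formula gives $x_{n+1}^{\,p} = x_n^{\,p} y^{-p} \cdot \prod_{k=2}^{p} c_k^{\binom{p}{k}}$ with $c_k \in \gamma_k(\langle x_n, y^{-1}\rangle)$. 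The leading factor $x_n^{\,p} y^{-p}$ lies in $P_{n+1}$ by the choice of $y$. For the correction terms one notes that $\langle x_n, y^{-1}\rangle$ becomes cyclic modulo the normal closure of $y$, whence $\gamma_2(\langle x_n,y^{-1}\rangle) \subseteq [P_i,P_{n-1}]\,[P_{n-1},P_{n-1}] \subseteq P_{i+n-1}$ and $\gamma_3(\langle x_n,y^{-1}\rangle)\subseteq[P_{i+n-1},P_i]\subseteq P_{n+1}$; as $p \ge 3$ forces $p \mid \binom{p}{2}$, we get $c_2^{\binom{p}{2}}\in P_{i+n-1}^{\,p}=P_{i+n}\subseteq P_{n+1}$ and $c_k^{\binom{p}{k}}\in P_{n+1}$ for $k\ge 3$, so $x_{n+1}^{\,p}\in P_{n+1}$. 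Finally, the finitely many nontrivial cosets of $P_{i+1}$ in $P_i$ are compact, so a subsequence of $(x_n)$ converges to some $v \in P_i \setminus P_{i+1}$; since $v^p = \lim x_{n_k}^{\,p}$ and $P_m$ is closed with $x_{n_k}^{\,p} \in P_m$ for $n_k \ge m$, we get $v^p \in \bigcap_m P_m = 1$, so $v$ is a nontrivial element of order $p$ --- contradicting torsion-freeness.

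The inductive step is the only genuine obstacle: it is where one has to keep exact account of the level of each Hall--Petrescu correction term in the lower $p$-series, and where the hypothesis $p \ge 3$ is really used (for $p = 2$ one has to reorganise the argument around the definition $[G,G] \le G^4$, a case I would handle separately). Everything else --- the reduction to injectivity of the $\phi_i$, the forward direction, and the concluding compactness argument --- is routine once the structure theory of powerful pro-$p$ groups is available.
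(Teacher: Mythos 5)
The paper does not actually prove this statement --- it is quoted directly from \cite[Theorem~4.5]{DidSMaSe99} --- so the comparison is with the standard textbook argument, which your proposal essentially reconstructs: reduce condition~(iii) of uniformity to injectivity of the induced power maps $\phi_i \colon P_i/P_{i+1} \to P_{i+1}/P_{i+2}$, get the forward direction by locating a hypothetical element of order $p$ in some $P_j \setminus P_{j+1}$, and for the converse push a kernel element of some $\phi_i$ down the series by successive corrections converging $p$-adically to a nontrivial element of order $p$. For $p \geq 3$ your argument checks out: $y \in P_{n-1} \subseteq P_{i+1}$ keeps $x_{n+1}$ outside $P_{i+1}$; $\gamma_2(\langle x_n, y^{-1}\rangle)$ is the normal closure of $[x_n,y^{-1}]$ and so lies in $[P_i,P_{n-1}] \subseteq P_{i+n-1}$, whence $c_2^{\binom{p}{2}} \in P_{i+n-1}^{\,p} = P_{i+n} \subseteq P_{n+1}$ using $p \mid \binom{p}{2}$, while $\gamma_3 \subseteq [P_{i+n-1},P_i] \subseteq P_{n+1}$; and the limit lies in the closed set $P_i \setminus P_{i+1}$ with $p$th power in $\bigcap_m P_m = 1$. (In fact $x_{n+1} \equiv x_n \bmod P_{n-1}$, so the sequence is already Cauchy and no subsequence is needed.) The one genuine loose end is $p=2$, which the theorem as stated covers and which you explicitly defer: there $\binom{2}{2}=1$ is not divisible by $2$, and the correction term $[y^{-1},x_n]^{\,y^{-1}} \in [P_{n-1},P_i]$ only reaches $P_{n+i-1}$, which fails for $i=1$. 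The repair is that for $p=2$ the hypothesis $[G,G] \leq G^4$ yields the sharper inclusion $[P_a,P_b] \leq P_{a+b+1}$, which restores the estimate; without some such remark the even case remains open in your write-up, though everything you do prove is correct and is the intended argument.
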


If a finitely generated pro-$p$ group $G$ is powerful, then it has
rank $\rank(G) = d(G)$, but the converse does not generally hold; see
\cite[Theorem~3.8]{DidSMaSe99}.  In view of \cite[Theorem~1.3]{Kl11},
we can rephrase Conjecture~\ref{con:infinite} as follows.

\begin{conjecture-prime}
  Suppose that $p \geq 3$ and let $G$ be a pro-$p$ group.  Then $G$ is
  uniform if and only if it is finitely generated, torsion-free and
  $\rank(G) = d(G)$.
\end{conjecture-prime}

\subsection{} We now prove the results about soluble pro-$p$ groups
stated in Section~\ref{sec:main-results}.  Furthermore, we provide
examples to explain the restrictions that we impose on~$p$.

\begin{proof}[Proof of Theorem~\ref{thm:soluble}]
  Suppose that $d(G) = \dim(G)$.  We need to prove that $G$ is
  powerful and torsion-free.  If $G$ is the trivial group there is
  nothing further to do.  Hence suppose that $G \neq 1$.

  First we show that $G$ is powerful.  Choose a normal subgroup $N$ of
  $G$ such that $H := G/N$ is just-infinite.  Note that both $\pi(N)$
  and $\pi(H)$ are trivial.  By \cite[Theorem~1.3]{Kl11} we have
  $d(N)\leq \dim(N)$ and $d(H)\leq \dim(H)$.  Thus we deduce from
  \cite[Theorem~4.8]{DidSMaSe99} that
  \begin{displaymath}
    \dim(G) = d(G) \leq d(H)+d(N) \leq \dim(H) + \dim(N) = \dim(G),
  \end{displaymath}
  which implies $d(H)=\dim(H)$ and $d(N)=\dim(N)$.  Since $\dim(N) <
  \dim(G)$, it follows by induction that $N$ is powerful.  We observe
  that in order to show that $G$ is powerful it suffices to show that
  $H$ is powerful: if $H$ is powerful then
  \begin{multline*}
    \lvert G : G^p \rvert = \lvert G : G^p N \rvert \lvert N : N
    \cap{G}^p \rvert \leq \lvert H : H^p\rvert \lvert N : N^p \rvert =
    p^{d(H) + d(N)} \\ = p^{\dim(H)+ \dim(N)} = p^{\dim(G)} = p^{d(G)}
    = \lvert G : G^p[G,G] \vert \leq \lvert G : G^p \rvert,
  \end{multline*}
  and we obtain $[G,G] \leq G^p$.

  Therefore we may assume that $G=H$ is just-infinite.  Since $G$ is
  soluble, we deduce that $G$ is virtually abelian;
  see~\cite[Ch.~12]{LeMc2002}.  Put $d = d(G) = \dim(G)$ and
  choose an open normal subgroup $B \trianglelefteq G$ such that $B
  \cong \mathbb{Z}_p^d$.  Let $A := \Cen_G(B) \trianglelefteq G$, the
  centraliser of $B$ in~$G$, and write $\Zen(A)$ for the centre
  of~$A$.  Then $\lvert A : \Zen(A) \rvert \leq \lvert A : B \rvert <
  \infty$, and hence $[A,A]$ is finite by Schur's theorem.  Since $G$
  is just-infinite we must have $[A,A]=1$.  Hence $A$ is abelian and
  self-centralising in~$G$.  Since $\pi(G)=1$, we conclude that $A$ is
  torsion-free.  The group $\bar{G} := G/A$ is finite and acts
  faithfully on $A \cong \Z_p^d$.  In this way we obtain an embedding
  $\bar{G} \hookrightarrow \GL(A) \cong \GL_d(\Z_p)$.  If $\bar{G}$ is
  trivial then $G = A$ is abelian, hence powerful.

  For a contradiction, we now assume that $\bar{G} \neq 1$.  Let $C =
  \langle x \rangle A$ be a subgroup of $G$ such that $\bar{C} := C/A
  = \langle \bar{x} \rangle$ is cyclic of order $p$ and contained in
  the centre $\Zen(\bar{G})$ of $\bar{G}$.  According
  to~\cite[Theorem~2.6]{HeRe62}, there are three indecomposable types
  of $\Z_p \bar{C}$-modules which are free and of finite rank as
  $\Z_p$-modules:
  \begin{enumerate}
  \item[(i)] the trivial module $I = \Z_p$ of $\Z_p$-dimension $1$,
  \item[(ii)] the module $J = \Z_p \bar{C} / (\Phi(\bar{x}))$ of
    $\Z_p$-dimension $p-1$, where $\Phi(X) = 1 + X + \ldots + X^{p-1}$
    denotes the $p$th cyclotomic polynomial,
  \item[(iii)] the free module $K = \Z_p \bar{C}$ of $\Z_p$-dimension
    $p$.
  \end{enumerate}
  Hence the $\Z_p \bar{C}$-module $A$, which is free and of finite
  rank as a $\Z_p$-module, decomposes as a direct sum of
  indecomposable submodules
  \begin{displaymath}
    A = (\oplus_{i=1}^{m_1}I_i) \oplus (\oplus_{j=1}^{m_2}J_j) \oplus
    (\oplus_{k=1}^{m_3}K_k), 
  \end{displaymath}
  where $m_1, m_2, m_3 \in \{0 \} \cup \N$ and $I_i\cong I$, $J_j\cong
  J$, $K_k\cong K$ for all indices $i,j,k$.

  Put $A_1 = \oplus_{i=1}^{m_1}I_i$, $A_2 = \oplus_{j=1}^{m_2}J_j$ and
  $A_3 = \oplus_{k=1}^{m_3}K_k$.  Since $\bar{C}$ is central in
  $\bar{G}$, the decomposition $A = A_1\oplus A_2 \oplus A_3$ is
  $\bar{G}$-invariant. Since $G$ is just-infinite we conclude that $A
  = A_i$ for precisely one $i \in \{1,2,3\}$.  We cannot have $A=A_1$,
  because $\bar{C}$ acts faithfully on $A$.  Thus either $A = A_2$ or
  $A = A_3$, and consequently $d_{\Z_p \bar{C}}(A) \leq \max \{ m_2,
  m_3 \} \leq \lfloor \frac{d}{p-1} \rfloor$.  Moreover,
  \cite[Proposition~3.5]{Kl11} shows that $d(\bar{G})\leq \lfloor
  \frac{d}{p-1} \rfloor$.  Hence from $p \geq 5$ we obtain
  \begin{equation} \label{equ:p-geq-5}
    d(G)\leq d(\bar{G}) + d_{\Z_p \bar{C}}(A) \leq 2 \lfloor
    d/(p-1) \rfloor < d = \dim(G)
  \end{equation}
  in contradiction to $d(G) = \dim(G)$.  This concludes the proof that
  $G$ is powerful.
  
  It remains to show that $G$ is torsion-free.  By
  \cite[Theorem~4.20]{DidSMaSe99}, the collection of all elements of
  finite order in $G$ forms a characteristic subgroup $T$ of~$G$.
  Clearly, $T \subseteq \pi(G)=1$ implies that $G$ is torsion-free.
\end{proof}

The following example illustrates that the assertion of
Theorem~\ref{thm:soluble} does not extend without modifications to
$p=3$.

\begin{example}\label{one}
  Consider the pro-$3$ group $G = \langle z \rangle \ltimes
  \Z_3[\xi]$, where $\langle z \rangle \cong C_3$, $\Z_3[\xi] = \Z_3 +
  \Z_3 \xi \cong \Z_3^2$ for a primitive $3$rd root of unity $\xi$ and
  where $z$ acts on $\Z_3[\xi]$ as multiplication by~$\xi$.  One
  easily verifies that $G$ is not powerful, even though $G$ is
  soluble, $\pi(G)=1$ and $d(G) = 2 = \dim(G)$.
\end{example}

In order to prove Theorem~\ref{thm:soluble-p-equals-3} we need to
analyse more carefully the case $p=3$.  

\begin{lem} \label{three}
  Suppose that $p \geq 3$ and let $G$ be a just-infinite soluble
  pro-$p$ group of finite rank such that $\pi(G)=1$. If $d(G) =
  \dim(G)$, then $\dim(G) \leq 2$.
\end{lem}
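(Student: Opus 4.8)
The idea is to run essentially the same structural analysis as in the proof of Theorem~\ref{thm:soluble}, but to push the arithmetic through for $p=3$ (and uniformly for all $p\geq 3$) by being more careful about how the module-theoretic data interact. So first I would reduce, exactly as before, to the set-up in which $G$ is just-infinite, soluble, torsion-free of finite rank with $\pi(G)=1$; since $G$ is soluble it is virtually abelian (\cite[Ch.~12]{LeMc2002}), and one obtains a self-centralising abelian normal subgroup $A\cong\Z_p^d$ with $d=d(G)=\dim(G)$ and a faithful finite quotient $\bar G=G/A\hookrightarrow\GL_d(\Z_p)$. The goal is to show $d\leq 2$, so assume for contradiction that $d\geq 3$.

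The main point is to bound $d(G)$ from above in terms of $d(\bar G)$ and $d_{\Z_p\bar C}(A)$ for a suitable central subgroup $\bar C\leq\Zen(\bar G)$ of order $p$, then to derive a contradiction with $d(G)=d$. As in the proof of Theorem~\ref{thm:soluble}, pick $C=\langle x\rangle A$ with $\bar C=\langle\bar x\rangle$ cyclic of order $p$ and central in $\bar G$; then by \cite[Theorem~2.6]{HeRe62} the $\Z_p\bar C$-module $A$ is a direct sum of copies of the three indecomposables $I,J,K$ of $\Z_p$-ranks $1,p-1,p$, and the decomposition $A=A_1\oplus A_2\oplus A_3$ into isotypic parts is $\bar G$-invariant, so just-infiniteness forces $A=A_i$ for a single~$i$; since $\bar C$ acts faithfully, $A=A_2$ or $A=A_3$. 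In either case $d_{\Z_p\bar C}(A)\leq\lfloor d/(p-1)\rfloor$, and \cite[Proposition~3.5]{Kl11} gives $d(\bar G)\leq\lfloor d/(p-1)\rfloor$, whence $d=d(G)\leq d(\bar G)+d_{\Z_p\bar C}(A)\leq 2\lfloor d/(p-1)\rfloor$. For $p\geq 5$ this already contradicts $d\geq 1$ (this is just inequality~\eqref{equ:p-geq-5}); the only case not excluded is $p=3$ with $2\lfloor d/2\rfloor\geq d$, i.e.\ $d$ even, say $d=2e$ with $e\geq 2$ (odd $d\geq 3$ is still killed). So the real work is confined to $p=3$ and $d=2e\geq 4$.

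For $p=3$ and $d=2e\geq 4$ I would extract a sharper contradiction by exploiting structure beyond the single central subgroup $\bar C$. When $A=A_3$ is $\Z_3\bar C$-free the module $A$ is induced, so one can analyse the full action of $\bar G$ on the induced module and bound $d(\bar G)$ and $d(G)$ much more tightly (an induced module forces many relations among generators); when $A=A_2$, the action of $\bar G$ on $A$ factors through $\Z_3[\xi]$ with $\xi$ a primitive cube root of unity, so $\bar G$ embeds in $\GL_e(\Z_3[\xi])$ and one should recount $d(\bar G)$ over the ring $\Z_3[\xi]$ — here Example~\ref{one} is exactly the extremal $e=1$ case, which explains why $\dim\leq 2$ rather than $\dim\leq 1$ and why $e\geq 2$ must lead to a contradiction. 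In both subcases the plan is: bound $d(\bar G)$ via a base-change/submodule argument (using that a faithful $\bar G$ on a rank-$2e$ lattice with $e\geq 2$ cannot have $d(\bar G)$ as large as $e$ once one accounts for the cyclotomic constraint), and combine with $d_{\Z_3\bar C}(A)\leq e$, to force $d(G)<d$.

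**Main obstacle.** The reduction and the $p\geq 5$ (and odd-$d$) bookkeeping are routine given the cited results; the genuine difficulty is the residual case $p=3$, $A\cong\Z_3[\xi]^{\,e}$ (or $A$ a free $\Z_3\bar C$-module of rank $e$) with $e\geq 2$, where the crude estimate $d(\bar G)\leq\lfloor d/2\rfloor$ is not good enough. Sharpening it requires understanding how a finite $3$-group can act faithfully on such a lattice while keeping the generator count of the semidirect product down to $d$: one must show that packing enough of $\bar G$ to be faithful necessarily either introduces extra relations (lowering $d(\bar G)+d_{\Z_3\bar C}(A)$ below $d$) or creates torsion in $G$ (contradicting $\pi(G)=1$), and it is this case distinction over $\Z_3[\xi]$-modules — not the general machinery — that I expect to be the crux of the argument.
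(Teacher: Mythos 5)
Your reduction is exactly the paper's: pass to a self-centralising $A \cong \Z_p^d$, pick a central $\bar C \leq \Zen(\bar G)$ of order $p$, decompose $A$ into isotypic $\Z_p\bar C$-components via Heller--Reiner, use just-infiniteness to get a homogeneous decomposition, and the estimate $d(G) \leq d(\bar G) + d_{\Z_p\bar C}(A)$ to dispose of $p \geq 5$. (For $p=3$ with $A$ a free $\Z_3\bar C$-module the same estimate $\lfloor d/2\rfloor + d/3 < d$ already works, so your worry about the induced-module case is unnecessary; no finer analysis of induced modules is needed.) But the case you correctly identify as the crux --- $p=3$, $A \cong \Z_3[\xi]^m$ with $m \geq 2$ --- is not actually proved: you offer only a plan (``bound $d(\bar G)$ via a base-change/submodule argument\dots to force $d(G) < d$'') with no argument for why faithfulness of $\bar G$ on a rank-$m$ lattice over $\Z_3[\xi]$ with $m \geq 2$ is incompatible with $d(G) = 2m$. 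As stated, this is a genuine gap, and it is not filled by any of the cited results.

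For comparison, here is how the paper closes that case. Set $R = \Z_3[\xi]$ with uniformiser $\pi$, and embed $\bar G \hookrightarrow \GL_m(R)$, conjugated so that $\bar G$ is upper uni-triangular modulo $\pi$. If $\bar G \not\subseteq \GL_m^1(R) = \ker(\GL_m(R) \to \GL_m(R/\pi R))$, then $A$ needs at most $m-1$ generators as an $R\bar G$-module, giving $d(G) \leq d/2 + (m-1) = d-1 < d$, a contradiction. If $\bar G \subseteq \GL_m^1(R)$, then since $\GL_m^2(R)$ is torsion-free and $\bar G$ is finite, $\bar G$ embeds into the elementary abelian quotient $\GL_m^1(R)/\GL_m^2(R)$; hence $\bar G$ is elementary abelian of exponent $3$. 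Just-infiniteness makes $V = F \otimes_R A$ an irreducible $F\bar G$-module over $F = \Q_3(\xi)$, and since $F$ contains a primitive cube root of unity every irreducible representation of an abelian group of exponent $3$ over $F$ is one-dimensional; thus $m = 1$ and $d = 2$. Note that this is not a counting argument of the kind you propose for $m \geq 2$: the decisive inputs are the torsion-freeness of the second congruence subgroup (forcing $\bar G$ abelian) and Schur's lemma over a field with enough roots of unity, neither of which appears in your sketch.
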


\begin{proof}
  Suppose that $d := d(G) = \dim(G)$.  If $G$ is abelian, then $G
  \cong \Z_p$ and $d = 1$.  Now suppose that $G$ is not abelian.
  Arguing as in the proof of Theorem~\ref{thm:soluble}, we find an
  open normal subgroup $A \trianglelefteq G$ such that the quotient
  $\bar{G} = G/A \neq 1$ acts faithfully on~$A \cong \Z_p^d$.
  Moreover, there is a central cyclic subgroup $\langle \bar{x}
  \rangle = \bar{C} \leq \bar{G}$ of order~$p$ such that $A$, regarded
  as a $\Z_p \bar{C}$-module, decomposes into a $\bar{G}$-invariant
  homogeneous direct sum of pairwise isomorphic indecomposable
  submodules:
  \begin{displaymath}
    A = \oplus_{j=1}^mJ_j \quad \text{with $d = m (p-1)$} \qquad
    \text{or} \qquad A = \oplus_{k=1}^mK_k \quad \text{with $d = m p$,}  
  \end{displaymath}
  just as in the proof of Theorem~\ref{thm:soluble}.  If $p \geq 5$,
  then \eqref{equ:p-geq-5} yields a contradiction.

  Hence we have $p=3$.  Then $d(\bar{G}) \leq \lfloor d/2 \rfloor$
  by~\cite[Proposition~3.5]{Kl11} and, if $A = \oplus_{k=1}^m K_k$
  with $d = 3m$, then
 \begin{displaymath}
   d(G) \leq d(\bar{G}) + d_{\Z_3 \bar{C}}(A) \leq \lfloor d/2
   \rfloor + d/3 < d=\dim(G), 
  \end{displaymath}
  yielding a contradiction.  Hence we obtain $A = \oplus_{j=1}^mJ_j$
  with $d = 2m$.  We can regard $A$ as a free module of rank $m$ over
  the ring $R = \Z_p \bar{C} / (\bar{x}^2 + \bar{x} + 1)$.  This ring
  is naturally isomorphic to the valuation ring $\Z_3[\xi]$ of the
  totally ramified extension $\Q_3(\xi)$ of $\Q_3$ obtained by
  adjoining a primitive $3$rd root of unity~$\xi$.  The element $\pi$
  represented by $\bar{x}-1$ is a uniformiser of~$R$.  We obtain an
  embedding $\eta \colon \bar{G} \hookrightarrow \GL(A) \cong
  \GL_m(R)$.  The Sylow pro-$3$ subgroups of $\GL_m(R)$ are all
  conjugate to one another and we may assume that $\bar{G}^\eta$
  consists of matrices which are upper uni-triangular modulo~$\pi$.
  If $\bar{G}^\eta$ is not contained in $\GL_m^1(R) = \ker (\GL_m(R)
  \to \GL_m(R/\pi R))$ then
  \begin{displaymath}
    d(G) \leq d(\bar{G}) + d_{R \bar{G}}(A) \leq d/2 +
    (m - 1) = d-1 < d =\dim(G) 
  \end{displaymath}
  yields a contradiction.

  Hence we obtain $\bar{G}^\eta \subseteq \GL_m^1(R)$.  We observe
  that $\GL_m^2(R) = \ker (\GL_m(R) \to \GL_m(R/\pi^2R))$ is
  torsion-free.  Since $\bar{G}^\eta$ is finite, this implies that
  $\bar{G}$ embeds into $\GL_m^1(R)/\GL_m^2(R)$.  But the latter group
  is elementary abelian, hence $\bar{G}$ is elementary abelian.  Let
  $F \cong \Q_3(\xi)$ denote the field of fractions of $R$.  Since $G$
  is just-infinite, $V := F \otimes_R A$ is an irreducible $F
  \bar{G}$-module.  Since $\bar{G}$ is abelian of exponent~$3$ and $F$
  contains a primitive $3$rd root of unity, we deduce that $m =
  \dim_F(V) = 1$ and $d =2$.
\end{proof}

We remark that Lemma~\ref{three} has no analogue for insoluble groups,
because there are just-infinite uniform pro-$p$ groups of arbitrarily
large dimension.
 
\begin{lem} \label{lem:uniform-by-cyclic}
  Let $G$ be a pro-$p$ group with a powerful open normal subgroup $N
  \trianglelefteq G$, and suppose that $z \in G \setminus N$.  If $G$
  is torsion-free, then $\langle z \rangle \cap (N \setminus N^p) \neq
  \varnothing$.
\end{lem}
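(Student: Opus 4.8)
The statement to prove is Lemma~\ref{lem:uniform-by-cyclic}: if $G$ is a torsion-free pro-$p$ group with a powerful open normal subgroup $N \trianglelefteq G$ and $z \in G \setminus N$, then $\langle z \rangle$ meets $N \setminus N^p$. Let me think about the structure here.

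We have $z \notin N$ but some power of $z$ is in $N$ (since $G/N$ is a finite $p$-group). Let $p^k$ be the smallest positive integer such that $z^{p^k} \in N$; since $z \notin N$, $k \geq 1$. Set $w = z^{p^{k-1}}$, so $w \notin N$ but $w^p \in N$. We want to find a power of $z$ (equivalently, analyze powers of $w$) landing in $N \setminus N^p$. The natural candidate: if $w^p \in N \setminus N^p$, we're done (since $w^p = z^{p^k}$ is a power of $z$). So suppose for contradiction $w^p \in N^p$.

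Since $N$ is powerful, we can write $N^p = \{x^p : x \in N\}$ (the $p$th powers form a subgroup and every element of $N^p$ is an actual $p$th power — this is a standard property of powerful pro-$p$ groups, from~\cite{DidSMaSe99}, Chapter 3). So $w^p \in N^p$ means $w^p = n^p$ for some $n \in N$. I would like to deduce from this that $w$ and $n$ are related in a way that forces torsion. The key fact to invoke: in a powerful (or more to the point, uniform) pro-$p$ group, the $p$th power map is "almost injective" — precisely, in a uniform group it is injective, and more generally the obstruction to injectivity in a powerful group is controlled by torsion. I'd argue: $N$ powerful and torsion-free implies (by Theorem~\ref{thm:uniform-iff-pow-tf}, once we know $N$ is finitely generated — which it is, being open in a finite-rank group) that $N$ is uniform, hence the $p$th power map on $N$ is injective.

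**The main argument and the obstacle.**

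Now $w^p = n^p$ with $n \in N$ but $w \notin N$. Consider the subgroup $M = \langle N, w \rangle$; it contains $N$ with index $p$ (since $w^p \in N$), so $M$ is also open, normal is not guaranteed but we don't need that. I want to produce a torsion element to contradict torsion-freeness of $G$. Here is the cleanest route I can see: work with the element $t = w n^{-1}$ if $w$ and $n$ commuted we'd get $t^p = 1$ immediately and $t \neq 1$ (as $w \notin N \ni n$), contradiction — but they need not commute. So the real content is to handle the non-commutativity. The plan is to pass to a quotient or a subgroup where commutativity is recovered: since $N$ is uniform and $w$ normalizes $N$ (as $N \trianglelefteq G$), conjugation by $w$ is an automorphism of $N$ of $p$-power order; using that $w^p \in N$ acts on $N$ by an inner automorphism equal to conjugation by $w^p = n^p$. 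One then studies $\langle n, w \rangle$, which is a pro-$p$ group generated by two elements with $w^p = n^p$, sitting inside the torsion-free $G$. I expect the main obstacle to be exactly this: ruling out the possibility that $w$ and $n$ generate a torsion-free group despite $w^p = n^p$ — one must show such a configuration cannot occur inside a torsion-free pro-$p$ group when $n$ lies in a powerful normal subgroup and $w$ does not. I would handle it by descending: replace $G$ by the finite quotient $G/N_i$ for suitable terms $N_i$ of a central powerful filtration of $N$, reduce to a finite $p$-group computation, and use that in the finite quotient torsion-freeness-up-to-level forces $w \in N \cdot(\text{the relevant power subgroup})$, contradicting minimality of $k$ or the assumption $w^p \in N^p$. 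Concretely, I would run an induction on $\dim(N)$, passing to $N/Z$ for $Z$ a suitable $G$-invariant uniform subgroup of the centre of $N$ (powerful groups have large centre in controlled ways), which should let commutators disappear and collapse the problem to the abelian case, where it is the transparent computation $t^p = 1, t \neq 1$.

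**Summary of steps.**

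\textbf{Step 1.} Reduce to $w \in G\setminus N$ with $w^p \in N$; assume for contradiction $w^p \in N^p$. \textbf{Step 2.} Note $N$ is finitely generated, powerful, torsion-free, hence uniform; in particular $p$th powers in $N$ are genuine $p$th powers and the power map on $N$ is injective, so $w^p = n^p$ for a unique $n \in N$. \textbf{Step 3.} Show $w$ and $n$ can be taken to commute, after passing to an appropriate $G$-invariant central quotient of $N$ (induction on $\dim N$, base case $N$ abelian). \textbf{Step 4.} In the reduced situation $t := wn^{-1}$ satisfies $t^p = 1$ and $t \neq 1$ (since $w \notin N$, $n \in N$), contradicting torsion-freeness of $G$. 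Hence $w^p = z^{p^k} \in N \setminus N^p$, proving $\langle z\rangle \cap (N\setminus N^p) \neq \varnothing$. The delicate point, as noted, is Step~3: making the non-commuting pair $\{w,n\}$ behave, which is where the powerfulness of $N$ and the fine structure of uniform groups (the relation between $P_i(N)$ and $N^{p^i}$, and centrality of commutators modulo deeper terms) must be used.
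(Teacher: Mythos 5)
Your Steps 1, 2 and 4 coincide with the paper's argument, and you have correctly isolated the crux: given $w\notin N$ and $n\in N$ with $w^p=n^p$, one must show that $w$ and $n$ commute. But Step 3 is a genuine gap, and the strategy you sketch for closing it would not work. Passing to a central quotient $N/Z$, or to finite quotients $G/N_i$, destroys precisely the hypothesis your final contradiction relies on: a quotient of a torsion-free pro-$p$ group by a closed normal subgroup need not be torsion-free, so in the quotient the conclusion ``$t^p=1$ and $t\neq 1$'' is not a contradiction. For the same reason an induction on $\dim(N)$ through such quotients cannot be closed, since the inductive hypothesis presupposes torsion-freeness of the smaller group. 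Moreover, even in your proposed base case where $N$ is abelian, you would still have to prove that $w$ --- which lies \emph{outside} $N$ --- commutes with $n\in N$; commutativity of $N$ by itself gives nothing here.

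The paper resolves this point with no quotient and no induction. Since $N$ is uniform and normal, conjugation by $z$ (your $w$) induces an automorphism $\sigma$ of $N$, and by \cite[Theorem~4.17]{DidSMaSe99} the uniform group $N$ carries the structure of a $\Z_p\langle z\rangle$-module which is free of finite rank over $\Z_p$, in which the group-theoretic $p$th power corresponds to multiplication by~$p$. From $n^p=z^p$ one gets $n^p\in \Cen_N(z)$, i.e.\ $p\,a$ is fixed by $\sigma$, where $a$ is the module element corresponding to $n$. Then $p(\sigma(a)-a)=\sigma(pa)-pa=0$, and torsion-freeness of the module forces $\sigma(a)=a$, that is, $n\in\Cen_N(z)$. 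Now $(zn^{-1})^p=z^pn^{-p}=1$ with $zn^{-1}\neq 1$ contradicts torsion-freeness of $G$. This isolation (purity) of the fixed-point submodule is the idea missing from your write-up, and without it or a substitute the proof is incomplete.
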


\begin{proof}
  Suppose that $G$ is torsion-free.  Without loss of generality we may
  assume that $z^p \in N$ and, for a contradiction, we assume that
  $z^p \in N^p$.  Note that $N$ is uniform so that $z^p = a^p$ for $a
  \in N$.  Furthermore, $N$ admits the structure of a $\Z_p \langle z
  \rangle$-module which is free and of finite rank over $\Z_p$; see
  \cite[Theorem~4.17]{DidSMaSe99}.  Thus $a^p \in \Cen_N(z)$ implies
  $a \in \Cen_N(z)$ so that $(z a^{-1})^p = 1$.  Since $z a^{-1} \neq
  1$, this yields a contradiction.
\end{proof}

\begin{proof}[Proof of Theorem~\ref{thm:soluble-p-equals-3}] Suppose
  that $d(G) = \dim(G)$.  If $G$ is trivial, there is nothing to
  prove.  Now suppose that $G \neq 1$ and choose a normal subgroup $N
  \trianglelefteq G$ such that $H := G/N$ is just-infinite. Proceeding
  as in the proof of Theorem~\ref{thm:soluble}, we conclude that $d(N)
  = \dim(N)$ and $d(H) = \dim(H)$.  By induction, $N$ is uniform.
  Moreover, since $\pi(H)=1$, Lemma~\ref{three} implies that $\dim(H)
  \leq 2$.

  For a contradiction, assume that $\dim(H)=2$.

  \smallskip

  \noindent \emph{Claim.} We have $H \cong \langle \bar{z} \rangle\
  \ltimes \Z_3[\xi]$, where $\langle \bar{z} \rangle \cong C_3$,
  $\Z_3[\xi] = \Z_3 + \Z_3 \xi \cong \Z_3^2$ for a primitive $3$rd
  root of unity $\xi$ and where $\bar{z}$ acts on $\Z_3[\xi]$ as
  multiplication by $\xi$.

  \smallskip

  \noindent \emph{Proof of the claim.} We analyse $H$ as before. Let
  $A\cong \Z_3^2$ be a self-centralising normal subgroup of $H$. Then
  $H/A$ acts faithfully on $A$ so that we obtain an embedding $\bar{H}
  = H/A \hookrightarrow \GL_2(\Z_3)$.  Inspection of the Sylow pro-$3$
  subgroup of $\GL_2(\Z_3)$ shows that $\bar{H} \cong C_3$ is acting
  fixed-point-freely on~$A$.  Thus $H = \langle \bar{z} \rangle
  \ltimes A$, where ${\bar{z}}$ has order $3$ and acts on $A\cong
  \Z_3[\xi]$ as multiplication by a $3$rd root of~$1$. This proves the
  claim.

  \smallskip

  In particular, $d(H) = 2$ and we can complement $\bar{z}$ to a
  generating pair for~$H$.  Choose a pre-image $z \in G$ of $\bar{z}$
  with respect to the quotient map $G \to G/N$.  Since $G$ is
  torsion-free, the powerful group $N$ is uniform and
  Lemma~\ref{lem:uniform-by-cyclic} implies that $z^3 \in N \setminus
  N^3 = N \setminus N^3[N,N]$.  This implies that
  \begin{displaymath}
    d(G) \leq d(H) + (d(N) - 1) = \dim(H) + \dim(N) - 1 < \dim(G),
  \end{displaymath}
  which is a contradiction.

  This forces $\dim(H)=1$ so that $H \cong \Z_3$ is powerful.  From
  \begin{displaymath}
    3^{\dim(G)} = 3^{d(G)} \leq \lvert G:G^3 \rvert \leq 3 \lvert N:N^3 \rvert =
    3^{1+\dim(N)} = 3^{\dim(G)} 
  \end{displaymath}
  it follows that $\lvert G:G^3[G,G] \rvert = 3^{d(G)}= \lvert G:G^3
  \rvert$. We deduce that $[G,G] \subseteq G^3$.  Hence $G$ is
  powerful and, because $G$ is torsion-free, $G$ is uniform.
\end{proof}

Implicitly the proofs of Theorems~\ref{thm:soluble} and
\ref{thm:soluble-p-equals-3} also yield the following fact.

\begin{pro}
  Suppose that $p \geq 3$ and let $G$ be a soluble uniform pro-$p$
  group.  Then $G$ is poly-$\Z_p$, i.e., there exists a finite chain
  of subgroups $G = G_1 \supseteq \ldots \supseteq G_{r+1} = 1$ such that
  $G_{i+1} \trianglelefteq G_i$ and $G_i/G_{i+1} \cong \Z_p$ for $1
  \leq i \leq r$.
\end{pro}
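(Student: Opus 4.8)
The plan is to argue by induction on $\dim(G)$. If $\dim(G)=0$ then $G=1$ and the empty chain ($r=0$) suffices, so suppose $\dim(G)\ge 1$, in which case $G$ is infinite; being finitely generated, it admits a just-infinite quotient $H=G/N$. I shall show that $H\cong\Z_p$ and that $N$ is again a soluble uniform pro-$p$ group. Granting this, $\dim(N)=\dim(G)-\dim(H)=\dim(G)-1<\dim(G)$, so the induction hypothesis yields a chain $N=N_1\supseteq\cdots\supseteq N_{s+1}=1$ with $N_{i+1}\trianglelefteq N_i$ and $N_i/N_{i+1}\cong\Z_p$; prefixing $G$ produces the chain $G\supseteq N=N_1\supseteq\cdots\supseteq N_{s+1}=1$, which exhibits $G$ as poly-$\Z_p$.

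First I show $H\cong\Z_p$. As a quotient of the uniform group $G$, the group $H$ is powerful and finitely generated, and since $H$ is just-infinite and infinite we have $\pi(H)=1$. By \cite[Theorem~4.20]{DidSMaSe99} the torsion elements of the powerful pro-$p$ group $H$ form a characteristic subgroup $T$; as $H$ has finite rank, $T$ is finite, hence (being normal) trivial. Thus $H$ is torsion-free, so $H$ is uniform by Theorem~\ref{thm:uniform-iff-pow-tf} and in particular $d(H)=\dim(H)$. Now $H$ is a just-infinite soluble pro-$p$ group with $\pi(H)=1$ and $d(H)=\dim(H)$, so Lemma~\ref{three} gives $\dim(H)\le 2$. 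If $\dim(H)=2$, then the analysis carried out in the proofs of Theorems~\ref{thm:soluble} and~\ref{thm:soluble-p-equals-3} forces $p=3$ and $H$ to be isomorphic to the group displayed in Example~\ref{one}, which contains an element of order $3$ — contradicting the torsion-freeness of $H$. Hence $\dim(H)=1$, and a torsion-free procyclic pro-$p$ group is isomorphic to $\Z_p$.

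Second I show $N$ is soluble and uniform. Solubility passes to the closed subgroup $N$, which is moreover torsion-free of finite rank, hence finitely generated; by Theorem~\ref{thm:uniform-iff-pow-tf} it remains to verify that $N$ is powerful. Since $G$ is powerful and $p\ge 3$, we have $[N,N]\subseteq[G,G]\cap N\subseteq G^p\cap N$, and $G^p\cap N=N^p$: indeed, if $x\in G^p\cap N$ then $x=g^p$ for some $g\in G$ because $G$ is uniform, and then $g^pN=N$ in $G/N\cong\Z_p$; as the latter is torsion-free, $g\in N$, so $x\in N^p$. Therefore $[N,N]\subseteq N^p$, so $N$ is powerful and hence uniform, completing the induction. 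The only non-formal ingredient is the identification $H\cong\Z_p$ of the just-infinite quotient, which is precisely what the proofs of the theorems on soluble pro-$p$ groups deliver; the point demanding a little care is that the kernel $N$ of a surjection onto a torsion-free quotient is not merely torsion-free of finite rank but actually uniform, which is what makes the induction go through.
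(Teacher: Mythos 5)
Your proof is correct and follows the same inductive skeleton that the paper leaves implicit in the proofs of Theorems~\ref{thm:soluble} and~\ref{thm:soluble-p-equals-3}: pass to a just-infinite quotient, identify it with $\Z_p$, and recurse on the kernel. Your direct verifications --- that the just-infinite quotient of a \emph{uniform} soluble group cannot have dimension $2$ because the only candidate (the group of Example~\ref{one}) has torsion, and that the kernel $N$ is powerful via $G^p \cap N = N^p$ --- are both valid and make the implicit argument explicit.
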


\begin{example}
  Consider the metabelian group $G := \langle y \rangle \ltimes A$,
  where $\langle y \rangle \cong \Z_2$, $A \cong \Z_2^{d-1}$ with $d
  \geq 2$ and $y$ acts on $A$ as scalar multiplication by $-1$.  Note
  that $G$ is not powerful, even though it is torsion-free and $d(G) =
  \dim(G) = \rank(G)$.
\end{example}

\begin{proof}[Proof of Corollary~\ref{cor:nilpotent}]
  By \cite[Proposition~16.4.2(v)]{LuSe03} the pro-$p$ completion
  $\widehat{\Gamma}_p$ of $\Gamma$ is a torsion free pro-$p$ group of
  finite rank and $\dim(\widehat{\Gamma}_p) = h(\Gamma)$.  Clearly,
  $\widehat{\Gamma}_p$ is soluble.  Now, using
  Theorems~\ref{thm:soluble} and~\ref{thm:soluble-p-equals-3}, we
  deduce that $\widehat{\Gamma}_p$ is a uniform pro-$p$ group if and
  only if $d(\widehat{\Gamma}_p) = \dim(\widehat{\Gamma}_p)$. Since
  \begin{displaymath}
    d(\widehat{\Gamma}_p) = \dim_{\F_p}(\widehat{\Gamma}_p /
     (\widehat{\Gamma}_p)^p [\widehat{\Gamma}_p,\widehat{\Gamma}_p] ) =
    \dim_{\F_p}(\Gamma / \Gamma^p [\Gamma,\Gamma] ),
  \end{displaymath}
  the assertion follows.
\end{proof}

Let $\Gamma$ be a finitely generated nilpotent group.  If
$\Gamma/[\Gamma,\Gamma]$ and $\Gamma$ have the same Hirsch length,
i.e., $h(\Gamma/[\Gamma,\Gamma]) = h(\Gamma)$, then $[\Gamma, \Gamma]$
is finite and $\Gamma$ is an FC-group, i.e., a group with finite
conjugacy classes, so that $\lvert \Gamma : \Zen(\Gamma) \rvert <
\infty$.  In this case $\widehat{\Gamma}_p$ is uniform for almost all
primes~$p$.  On the other hand, if $h(\Gamma/[\Gamma,\Gamma]) <
h(\Gamma)$ then $\widehat{\Gamma}_p$ is uniform for at most finitely
many primes $p$: the group fails to be uniform for all $p$ with $p
\notin \Pi(\Gamma/[\Gamma,\Gamma])$.  This should be contrasted with
the fact that in any case $\widehat{\Gamma}_p$ is saturable for almost
all primes~$p$, because it is so for all $p \notin \Pi(\Gamma)$ with
$p > h(\Gamma)$, by~\cite[Theorem~A]{GoKl09}.

\subsection{} Next we prove the assertions in
Section~\ref{sec:main-results} about pro-$p$ groups $G$ of finite rank
which are not necessarily soluble. We recall the following concepts
from~\cite{FeGoJa08}.

\begin{definition} \label{def:potent} Let $G$ be a pro-$p$ group. A
  \emph{potent filtration} in $G$ is a descending series $N_i$, $i \in
  \N$, of subgroups of $G$ satisfying the following conditions:
  \begin{itemize}
  \item[(i)] $\bigcap_{i\in \mathbb{N}} N_i = 1$,
  \item[(ii)] $[N_i, G] \leq N_{i+1}$ for all $i \in \N$,
  \item[(iii)] $[N_i,\underbrace{G,\ldots,G}_{p-1}] \leq N_{i+1}^p$ for
    all $i \in \N$.
  \end{itemize}

  We say that a subgroup $N$ of $G$ is \emph{PF-embedded} in $G$ if
  there is a potent filtration in $G$ starting at $N$.  The pro-$p$
  group $G$ is called a \emph{PF-group} if $G$ is PF-embedded in
  itself.
\end{definition}

Note that if $N_i$, $i\in \N$, is a potent filtration in a pro-$p$
group $G$ then for each $k\in \N$ the series $N_i$, $i \geq k$, is a
potent filtration starting at~$N_k$.  In particular, each $N_k$ is a
PF-group.

A finitely generated pro-$p$ group is saturable if it admits a certain
type of `valuation map'; for precise details we refer to \cite{Kl05}.
For instance, if $G$ is a uniform pro-$p$ group then one can show that
$G$ is saturable by considering the valuation map
\begin{displaymath}
  \omega \colon  G \rightarrow \mathbb{R}_{>0} \cup
  \{ \infty \}, \quad x \mapsto \sup \{ k \mid k \geq 1 \text{ and } x
  \in G^{p^{k-1}} \}. 
\end{displaymath}
In \cite{Go07}, Gonz\'alez-S\'anchez proved that a finitely generated
pro-$p$ group $G$ is saturable if and only if it is a torsion-free
PF-group.

% \begin{definition} \label{def:saturable} A finitely generated pro-$p$
%   group $G$ is $saturable$ if there exists a map $\omega \colon G
%   \rightarrow \mathbb{R}_{>0} \cup \{ \infty \}$ such that for all $x,
%   y \in G$ the following hold:
%   \begin{itemize}
%   \item[(i)] $\omega(x)> (p-1)^{-1}$,
%   \item[(ii)] $\omega(x)= \infty$ if and only if $x=1$,
%   \item[(iii)] $\omega(xy^{-1}) \geq \min \{\omega(x), \omega(y) \}$,
%   \item[(iv)] $\omega([x,y])\geq \omega(x) + \omega(y)$,
%   \item[(v)] $\omega(x^p)=\omega(x)+1$,
%   \item[(vi)] if $\omega(x) > p(p-1)^{-1}$ then there exists $z\in G$
%     such that $x = z^p$.
%   \end{itemize}
% \end{definition}

% We remark that this definition differs slightly from the original
% definition of Lazard, but it is not difficult to see that they are
% equivalent in the case of finitely generated pro-$p$ groups;
% see~\cite{Kl05}.  If $G$ is uniform then one can show that $G$ is
% saturable by considering the map
% \begin{displaymath}
%   \omega \colon  G \rightarrow \mathbb{R}_{>0} \cup
%   \{ \infty \}, \quad x \mapsto \sup \{ k \mid k \geq 1 \text{ and } x
%   \in G^{p^{k-1}} \}. 
% \end{displaymath}

\begin{proof}[Proof of Theorem~\ref{thm:injective}]
  Suppose that $d(G) = \dim(G)$.  Clearly, $G$ is torsion-free and it
  suffices to prove that $G$ is powerful.  Let $N$ be an open
  PF-embedded subgroup of $G$ and let $N_i$, $i \in \N$, be a potent
  filtration in $G$ starting at $N_1 = N$.
  
  For any subset $X \subseteq G$ we denote by $X^{\{p\}}$ the
  collection of all $p$th powers $x^p$ of elements $x \in X$.  Since
  $N$ is a saturable pro-$p$ group, we have $N^p=N^{\{ p \}}$.
       
  \smallskip
 
  \noindent \emph{Claim.} For every $x\in G$ we have $(xN)^{\{p\}} =
  x^p N^p$.

  \smallskip

  \noindent \emph{Proof of the claim.}  P.\ Hall's collection formula
  shows that for all $x\in G$ and $u\in N$,
  \begin{displaymath} 
    (xu)^p \equiv x^p u^p \mod 
    \gamma_2(\langle x, u \rangle)^p \gamma_p(\langle x, u \rangle).
  \end{displaymath}
  
  Since $N_i$, $i \in \N$, is a potent filtration in $G$ it follows
  that for all $x\in G$ and $w\in N_i$ we have $\gamma_2(\langle x, w
  \rangle)^p \subseteq [N_i,G]^p \subseteq N_{i+1}^{p}$ and
  $\gamma_p(\langle x, w \rangle) \subseteq
  [N_i,\underbrace{G,\ldots,G}_{p-1}] \subseteq N_{i+1}^{p}$. This
  implies that
  \begin{equation}\label{eq:congruence}
    (xw)^p \equiv x^p w^p \mod N_{i+1}^p
  \end{equation} 
  for all $x\in G$ and $w\in N_i$. Now since $N=N_1$, we deduce that
  $(xN)^{\{p\}} \subseteq x^pN^p$.
  
  For the reverse inclusion, we need to show that, given $x\in G$ and
  $u\in N$, there exists $v \in N$ such that $(xv)^p = x^pu^p$.  Note
  that by \eqref{eq:congruence} we have $(xu)^p u_2^p = x^pu^p$ for
  some $u_2\in N_2$.  Applying once more \eqref{eq:congruence}, we
  obtain $ (x u u_2)^p u_3^p = x^p u^p$ for some $u_3 \in
  N_3$. Proceeding in this way, we construct inductively a sequence
  $(u_i)_{i = 1}^\infty$, with $u_1 = u$, such that $u_i \in N_i$ and
  $(x u_1 u_2 \cdots u_i)^p u_{i+1}^p = x^pu^p$ for every $i \in
  \N$.  Writing $v_i = u_1 u_2 \cdots u_i$, we see that
  $(v_i)_{i= 1}^\infty$ is a Cauchy sequence and, taking limits, we
  obtain $(xv)^p = x^p u^p$, where $v = \displaystyle\lim_{i \to
    \infty} v_i \in N$.  This proves the claim.

  \smallskip

  Now suppose that $x,y \in G$ with $x^p N^p = y^p N^p$.  Then there
  exists $u \in N$ such that $(xu)^p = y^p$.  Since the $p$-power map
  is injective on $G$, we conclude that $xu = y$ so that $xN = yN$.
  Hence $G \rightarrow G$, $x \mapsto x^p$ induces a bijective
  correspondence between the cosets of $N$ in $G$ and their $p$th
  powers, i.e., a bijection $\{ xN \mid x \in G \} \rightarrow \{ x^p
  N^p \mid x \in G \}$, $x N \mapsto (xN)^{\{p\}} = x^p N^p$.

  Let $\mu$ be the normalised Haar measure on the compact group~$G$ so
  that $\mu(G) = 1$. Since $N$ is a saturable open subgroup of $G$,
  note that $\lvert N : N^p \rvert = p^{\dim(N)} = p^{\dim(G)}$. For
  every $x \in G$ we see that
  \begin{displaymath}
    \mu(x^p N^p) = \mu(N^p) = \lvert N : N^p \rvert^{-1} \mu(N) =
    p^{-\dim(G)} \lvert G : N \rvert^{-1},
  \end{displaymath}
  and thus we conclude that
  \begin{displaymath}
    \mu(G^{\{ p \}}) = \lvert G:N \rvert \left( p^{-\dim(G)} \lvert
      G:N \rvert^{-1} \right ) = p^{-\dim(G)}.  
  \end{displaymath}
  Using $d(G)=\dim(G)$, this yields
  \begin{displaymath}
    p^{d(G)} = \lvert G: G^p [G,G] \rvert \leq \lvert G:G^p
    \rvert = (\mu(G^p))^{-1} \leq \left( \mu(G^{\{p\}} \right)^{-1} =
    p^{\dim(G)} = p^{d(G)}.
  \end{displaymath}
  Hence $[G,G] \subseteq G^p$, and $G$ is powerful.
\end{proof}

\begin{proof}[Proof of Corollary~\ref{cor:sat-un}] 
  Suppose that $G$ is a subgroup of a finitely generated saturable
  pro-$p$ group~$S$.  Since $S$ is saturable, the map $S \rightarrow
  S$, $x \mapsto x^p$ is injective and therefore its restriction to
  $G$ is also injective.  By Theorem~\ref{thm:injective} it suffices
  to show that $G$ has an open PF-embedded subgroup.
  
  The saturable closure of $G$ in $S$, denoted by $\sat_S(G)$, is a
  saturable subgroup of $S$ which contains $G$ as an open subgroup and
  is the smallest saturable subgroup with this property;
  see~\cite{La65} or~\cite{Kl05}.  Replacing $S$ by $\sat_S(G)$, we
  may assume that $\lvert S :G \rvert$ is finite. Since $S$ is
  saturable, it is a PF-group. Let $S_i$, $i \in \N$, be a potent
  filtration of $S$ starting at $S_1 = S$.  Since $\lvert S : G \rvert
  < \infty$, there exists a positive integer $k$ such that
  $S^{p^k}\subseteq G$. Then $S^{p^k}$ is an open subgroup of $G$ and
  \cite[Proposition~3.2(iii)]{FeGoJa08} implies that $S_i^{p^k}$, $i
  \in \N$, is a potent filtration of~$S$. In particular, $S^{p^k}$ is
  an open PF-embedded subgroup of~$G$.
\end{proof}

As mentioned in the introduction, saturable pro-$p$ groups need not be
uniform.  Corollary~\ref{cor:char-sat-uniform}, which provides a
practical criterion for deciding whether a saturable group is uniform,
is a direct consequence of Corollary~\ref{cor:sat-un}.

\begin{proof}[Proof of Corollary~\ref{cor:large-p}]
  In~\cite{GoKl09} Gonz\'alez-S\'anchez and Klopsch proved that every
  torsion-free $p$-adic analytic pro-$p$ group of dimension less than
  $p$ is saturable.  Thus the assertion follows from
  Corollary~\ref{cor:char-sat-uniform}.
\end{proof}

Suppose that $p\geq3$. We recall that a subgroup $N$ of $G$ is
powerfully embedded in $G$ if $[N,G] \subseteq N^p$.  If $N$ is
powerfully embedded in $G$ then $N^{p^{i-1}}$, $i \in \N$, is a potent
filtration in~$G$; in particular, $N$ is PF-embedded in~$G$.  Thus we
obtain from Theorem~\ref{thm:injective} also the following corollary.

\begin{cor} \label{cor:powerfully embedded} Suppose that $p\geq
  3$. Let $G$ be a pro-$p$ group of finite rank with an open
  powerfully embedded subgroup and such that the map $G \to G$, $x
  \mapsto x^p$ is injective.  If $d(G) = \dim(G)$ then $G$ is uniform.
\end{cor}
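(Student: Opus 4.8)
The plan is to deduce Corollary~\ref{cor:powerfully embedded} directly from Theorem~\ref{thm:injective}. Two of the three hypotheses of that theorem --- that the $p$-power map $G \to G$, $x \mapsto x^p$ is injective and that $d(G) = \dim(G)$ --- are already built into the statement; what remains is to supply the third, namely an open PF-embedded subgroup of $G$. So the entire task reduces to showing that an open powerfully embedded subgroup $N \trianglelefteq G$ is PF-embedded, i.e.\ that it is the first term of some potent filtration in the sense of Definition~\ref{def:potent}.

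First I would take the obvious candidate $N_i := N^{p^{i-1}}$ for $i \in \N$, so that $N_1 = N$, and check the three conditions of Definition~\ref{def:potent}. Since $G$ has finite rank, $N$ is a finitely generated pro-$p$ group; moreover $[N,N] \subseteq [N,G] \subseteq N^p$, so $N$ is powerful, and hence (see~\cite{DidSMaSe99}) the subgroups $N^{p^{i-1}}$ form a descending chain of open subgroups with $\bigcap_{i \in \N} N_i = 1$, giving condition~(i). By the standard theory of powerful pro-$p$ groups, with $p \geq 3$ the $p$-th power of a powerfully embedded subgroup is again powerfully embedded, so inductively each $N^{p^{i-1}}$ is powerfully embedded in $G$, and $(N^{p^{i-1}})^p = N^{p^i}$. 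Hence $[N_i, G] = [N^{p^{i-1}}, G] \subseteq (N^{p^{i-1}})^p = N^{p^i} = N_{i+1}$, which is condition~(ii). Iterating this inclusion $p-1$ times yields
\begin{displaymath}
  [N_i, \underbrace{G, \ldots, G}_{p-1}] \subseteq N^{p^{i-1+(p-1)}} = N^{p^{i+p-2}} \subseteq N^{p^{i+1}} = (N^{p^i})^p = N_{i+1}^p,
\end{displaymath}
where the middle inclusion uses $i + p - 2 \geq i + 1$, i.e.\ $p \geq 3$; this is condition~(iii). (Alternatively one may simply invoke the remark preceding the statement, which records exactly that $N^{p^{i-1}}$, $i \in \N$, is a potent filtration in~$G$.)

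Thus $N$ is PF-embedded in $G$, and $G$ is a pro-$p$ group of finite rank with an open PF-embedded subgroup, on which --- indeed on all of $G$ --- the $p$-power map is injective, and with $d(G) = \dim(G)$. Theorem~\ref{thm:injective} now applies verbatim and gives that $G$ is uniform. I do not expect a genuine obstacle here: the only points requiring care are the standard facts about powerful pro-$p$ groups (that $p$-th powers of powerfully embedded subgroups stay powerfully embedded and that $(N^{p^k})^{p^\ell} = N^{p^{k+\ell}}$, both from~\cite{DidSMaSe99}) and the observation that the containment $N^{p^{i+p-2}} \subseteq N^{p^{i+1}}$, hence the potent-filtration condition~(iii), relies on $p \geq 3$, in harmony with the hypothesis of the corollary and of Theorem~\ref{thm:injective}.
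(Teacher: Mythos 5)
Your proposal is correct and follows exactly the route the paper intends: the remark preceding the corollary records that $N^{p^{i-1}}$, $i \in \N$, is a potent filtration starting at a powerfully embedded open subgroup $N$, so $N$ is PF-embedded and Theorem~\ref{thm:injective} applies directly. Your verification of the three conditions of Definition~\ref{def:potent} (including the use of $p \geq 3$ for condition~(iii)) simply fills in the details the paper leaves to the reader.
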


\subsection{} Finally, we explore a possible analogue of
Conjecture~\ref{con:infinite} for finite $p$-groups, as suggested
toward the end of Section~\ref{sec:main-results}.

\begin{proof}[Proof of Proposition~\ref{pro:qu-implies-conj}]
  Let $p\geq 3$ and suppose that the answer to
  Question~\ref{con:finite} is `yes'.  Let $G$ be a torsion-free
  pro-$p$ group of finite rank such that $d(G) = \dim (G)$.  We need
  to prove that $G$ is powerful.

  Let $U$ be a uniform open normal subgroup of $G$.  The open
  subgroups $U^{p^n}$, $n \in \N$ form a base for the neighbourhoods
  of $1$ in~$G$.  By Lemma~\ref{lem:uniform-by-cyclic} we have that
  $\{ x\in G \mid x^p\in U^p \} \subseteq U$. Since $U$ is uniform,
  this implies that for all $n\in \mathbb{N}$,
  \begin{equation} \label{equ:uniform} \Omega_1(G/U^{p^n})=\langle
    xU^{p^n} \in G/U^{p^n} \mid x^p \in U^{p^n} \rangle =
    U^{p^{n-1}}/U^{p^n}
  \end{equation}
  has size $p^{\dim(U)}$.  Since $p \geq 3$, we conclude from a result
  of Laffey~\cite[Corollary~2]{Laf73} that
  \begin{displaymath}
    \dim(G) = d(G)= \limsup_{n\in \mathbb{N}}d(G/U^{p^n})\leq
    \limsup_{n\in \mathbb{N}} \log_p \lvert \Omega_1(G/U^{p^n}) \rvert = \dim(G). 
  \end{displaymath}
  This implies that $d(G/U^{p^n}) = \log_p \lvert \Omega_1(G/U^{p^n})
  \rvert = d(G)$ for infinitely many $n \in \N$.  Let $k$ be the
  smallest positive integer such that $d(G/U^{p^k})=d(G)$. Then for
  each $n\geq k$ we have $d(G)\geq d(G/U^{p^n})\geq
  d(G/U^{p^k})=d(G)$, and consequently, $d(G/U^{p^n})=d(G)=\log_p
  \lvert \Omega_1(G/U^{p^n}) \rvert$.  The positive answer to
  Question~\ref{con:finite} implies that $G/U^{p^n}$ is a powerful
  finite $p$-group for each $n \geq k$.  Since $G$ is the inverse
  limit of the groups $G/U^{p^n}$, $n\geq k$, we deduce from
  \cite[Corollary~3.3]{DidSMaSe99} that $G$ is powerful.
\end{proof}

We remark that the group $\Omega_1(G/U^{p^n})=U^{p^{n-1}}/U^{p^n}$ in
\eqref{equ:uniform} is an elementary abelian $p$-group.  The proof of
Proposition~\ref{pro:qu-implies-conj} shows that
Conjecture~\ref{con:infinite} is true whenever the following `weaker'
version of Question~\ref{con:finite} has a positive answer.

\begin{question} \label{con:finite1} Suppose that $p \geq 3$ and let
  $G$ be a finite $p$-group such that $\Omega_1(G)$ is an elementary
  abelian $p$-group.  Is it true that $G$ is powerful if and only if
  $d(G) = \log_p \lvert \Omega_1(G) \rvert$?
\end{question}

Using \cite[Corollary~2]{Laf73}, it is easy to see that
Question~\ref{con:finite} has positive answer for all finite
$p$-groups $G$ with the property $\lvert G:G^p \rvert = \lvert
\Omega_1(G) \rvert$.  In the 1930s P.~Hall showed that this condition
is satisfied for regular $p$-groups; see~\cite{Ha33}.  It was proved
more recently in \cite{GoJa04} that for $p \geq 3$ the same property
is shared by every finite $p$-group $G$ which is potent, i.e., which
satisfies $\gamma_{p-1}(G) \leq G^p$; indeed, in this situation it is
true that $\lvert N:N^p \rvert = \lvert \Omega_1(N) \rvert$ for every
normal subgroup $N$ of $G$.
  
Suppose that $p\geq 3$ and consider a finite $p$-group $G$ with
$\Omega_1(G) \leq Z(G)$; such a group is called a $p$-central group.
Suppose that $d(G) = \log_p \lvert \Omega_1(G) \rvert$. Since $G$ is
$p$-central, by~\cite[Proposition~4]{Ma07}, we have $\lvert G:G^p
\rvert \leq \lvert \Omega_1(G) \rvert$.  Now from
\begin{displaymath}
  \lvert \Omega_1(G) \rvert = p^{d(G)} = \lvert G : G^p[G,G] \rvert \leq
  \lvert G:G^p \rvert \leq \lvert \Omega_1(G) \rvert
\end{displaymath}
we deduce that $[G,G] \subseteq G^p$, which means that $G$ is
powerful. Hence for $p$-central groups the answer to
Question~\ref{con:finite} is positive.

\subsection{}
We conclude the section with two observations.  In~\cite{Kl11} Klopsch
proved for $p \geq 3$ that every pro-$p$ group $G$ of finite rank with
$\pi(G)=1$ satisfies $\rank(G)=\dim(G)$.  This allows us to replace
$\dim(G)$ by $\rank(G)$ in all the results of this section.  For
instance we obtain the following consequence.

\begin{cor}
  Suppose that $p \geq 3$. Let $G$ be a finitely generated pro-$p$
  group with an open PF-embedded subgroup and such that the map $G \to
  G$, $x \mapsto x^p$ is injective.  If $d(G) = \rank(G)$ then $G$ is
  uniform.
\end{cor}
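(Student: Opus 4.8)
The plan is to obtain this corollary as an immediate consequence of Theorem~\ref{thm:injective}, once we translate the hypothesis $d(G) = \rank(G)$ into the form $d(G) = \dim(G)$ used there. The only ingredient needed for this translation is the equality $\rank(G) = \dim(G)$, valid for $p \geq 3$ and pro-$p$ groups $G$ of finite rank with $\pi(G) = 1$, established in~\cite{Kl11}.

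First I would check that $G$ actually has finite rank: since $G$ is finitely generated, $d(G) < \infty$, and the assumption $d(G) = \rank(G)$ then forces $\rank(G) < \infty$, so that $G$ is a pro-$p$ group of finite rank and Theorem~\ref{thm:injective} is applicable in principle. Next I would verify that $G$ is torsion-free. Indeed, if $x \in G$ had finite order $p^k$ with $k \geq 1$, then $y := x^{p^{k-1}} \neq 1$ would satisfy $y^p = 1 = 1^p$, contradicting the injectivity of the $p$-power map; hence $G$ has no non-trivial elements of finite order. In particular $\pi(G) = 1$, as the periodic radical of a torsion-free group is trivial.

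Now, since $p \geq 3$ and $\pi(G) = 1$, the result of~\cite{Kl11} gives $\rank(G) = \dim(G)$, and combining this with the hypothesis we obtain $d(G) = \dim(G)$. At this point $G$ is a pro-$p$ group of finite rank possessing an open PF-embedded subgroup, the $p$-power map on $G$ is injective, and $d(G) = \dim(G)$; thus Theorem~\ref{thm:injective} yields that $G$ is uniform. I do not expect any genuine obstacle: the substance of the argument is entirely contained in Theorem~\ref{thm:injective} and in the identity $\rank = \dim$ from~\cite{Kl11}, and the only point needing (minor) care is the verification, as above, that the finite-rank hypothesis of Theorem~\ref{thm:injective} is met.
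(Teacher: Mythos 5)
Your proposal is correct and follows exactly the route the paper intends: the corollary is stated there as an immediate consequence of Theorem~\ref{thm:injective} together with the identity $\rank(G)=\dim(G)$ from~\cite{Kl11} for $p\geq 3$ and $\pi(G)=1$. Your extra checks (finite rank from $d(G)=\rank(G)<\infty$, and torsion-freeness, hence $\pi(G)=1$, from injectivity of the $p$-power map) are precisely the minor verifications the paper leaves implicit.
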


Secondly, we record a straightforward, but useful result, which can be
regarded as a modification of~\cite[Proposition~4.4]{DidSMaSe99}.

\begin{pro}
  Let $G$ be a finitely generated powerful pro-$p$ group such that
  $d(G) = \dim(G)$.  Then $G$ is uniform.
\end{pro}

\begin{proof}
  Let $G_i=P_i(G)$, $i \in \N$, denote the terms of the lower central
  $p$-series of~$G$, and put $d_i := \log_p \lvert G_i:G_{i+1}
  \rvert$. By \cite[Theorem~3.6(iv)]{DidSMaSe99}, we have $d_1 \geq
  d_2 \geq \ldots$, hence there exists $k \in \N$ such that $d_i =
  d_k$ for all $ i \geq k$.  Now, by
  \cite[Theorem~3.6(ii)]{DidSMaSe99}, we obtain $P_j(G_k) = G_{k+j-1}
  = G_k^{p^j} = P_{j+k-1}(G)$ for all $j \in \N$, and
  \cite[Theorem~3.6(i)]{DidSMaSe99} shows that $G_k$ is powerful.
  Moreover, we have $\lvert P_j(G_k) : P_{j+1}(G_k) \rvert = \lvert
  G_k:P_2(G_k) \rvert$ for each $j \in \N$, which means that $G_k$ is
  uniform. Now we have
  \begin{displaymath}
    d(G)= d_1 \geq d_2 \geq \ldots \geq d_k = d(G_k) = \dim(G) = d(G),
  \end{displaymath}
  and consequently $d_i = d_1$ for all $i \in \N$. Hence $G$ is
  uniform.
\end{proof}

%%%%%

\section{Hereditarily powerful pro-$p$
  groups} \label{sec:hereditarily}

In the present section we prove the assertions in
Section~\ref{sec:applications}.

\begin{proof}[Proof of Theorem~\ref{thm:her-powerful}]
  Suppose that $G$ is hereditarily powerful.  Then $G$ is the inverse
  limit of hereditarily powerful finite $p$-groups $G_i$, $i \in I$,
  with respect to connecting homomorphisms $\phi_{ij} \colon G_i \to
  G_j$ for $i \succeq j$, where $I$ is a suitable directed set.  By
  \cite[Theorems~3.1 and~4.3.1]{LuMa87I}, a finite $p$-group is
  hereditarily powerful if and only if it is modular and, if $p=2$,
  not Hamiltonian.  (Thus excluded are all direct products of the
  quaternion group $Q_8$ with elementary abelian $2$-groups.)

  Finite modular groups were classified by Iwasawa; see \cite{Iw41} or
  \cite[Theorem~2.3.1]{Sc94}.  According to this classification, every
  finite $p$-group $H$ which is hereditarily powerful is of the
  following form: $H$ contains an abelian normal subgroup $K$ such
  that $H/K$ is cyclic and there exist an element $h \in H$ with $H =
  \langle h \rangle K$ and a positive integer $s$ such that $h^{-1}kh
  = k^{1+p^s}$ for all $k \in K$, with $s\geq 2$ in case $p=2$.  Hence
  each $G_i$, $i \in I$, is of this form and we denote by $X_i$ the
  non-empty, finite set consisting of all triples $(A_{i \lambda},b_{i
    \lambda},s_{i \lambda})$ such that $G_i = \langle b_{i \lambda}
  \rangle A_{i \lambda}$ with $A_{i \lambda} \trianglelefteq G_i$
  abelian, $s_{i \lambda} \in \{1, 2, \ldots, \log_p \lvert G_i \rvert
  \}$ and $b_{i \lambda} \in G_i$ acting on $A_{i \lambda}$ as
  multiplication by $1+p^{s_{i \lambda}}$ (if $p=2$ we also require
  $s_{i \lambda} \geq 2$).

  We consider the inverse system of the $X_i$, $i\in I$, with respect
  to the connecting maps $X_i \to X_j$ for $i \succeq j$ induced by
  the homomorphisms $\phi_{ij} \colon G_i \to G_j$.  By compactness,
  the inverse limit $X = \varprojlim_{i \in I} X_i$ is non-empty and
  any $x = (A,b,s) \in X$ yields $A \trianglelefteq G$ abelian, $b \in
  G$ and $s \in \N \cup \{\infty\}$ such that $G =
  \langle b \rangle A$ with $b$ acting on $A$ as multiplication by
  $1+p^s$.  Moreover, we obtain $s\geq 2$ for $p=2$.
 
  The proof that groups of the described shape are hereditarily
  powerful is straightforward.
\end{proof}

\begin{proof}[Proof of Corollary~\ref{cor:hered-uniform}]
  By Theorem~\ref{thm:uniform-iff-pow-tf}, a finitely generated
  pro-$p$ group is hereditarily uniform if and only if it is
  hereditarily powerful and torsion-free.

  Suppose that $G = \langle b \rangle A$ is as described in
  Theorem~\ref{thm:her-powerful} and, in addition, torsion-free.  Put
  $d = \dim(G)$.  If $G$ is abelian then $G \cong \Z_p^d$.  If $G$ is
  non-abelian then, because $A$ has elements of infinite order,
  $\langle b \rangle \cap A = 1$ so that $G$ is a semidirect product
  of $\langle b \rangle \cong \Z_p$ and $A \cong \Z_p^{d-1}$.
\end{proof}

In \cite[Theorem~1.1]{KlSn11} we classified finitely generated pro-$p$
groups with constant generating number on open subgroups, that is,
pro-$p$ groups $G$ with the property $d(H) = d(G)$ for every open
subgroup $H \leq G$; see also \cite{Sn09}.

\begin{thm}[Klopsch and Snopce] \label{thm:Kl-Sn}
  Let $G$ be a finitely generated pro-$p$ group and let $d :=
  d(G)$. Then $G$ has constant generating number on open subgroups if
  and only if it is isomorphic to one of the groups in the following
  list:
  \begin{enumerate}
  \item the abelian group $\Z_p^d$, for $d \geq 0$;
  \item the metabelian group $\langle y \rangle \ltimes A$, for $d
    \geq 2$, where $\langle y \rangle \cong \Z_p$, $A \cong
    \Z_p^{d-1}$ and $y$ acts on $A$ as scalar multiplication by
    $\lambda$, with $\lambda = 1+p^s$ for some $s \geq 1$, if $p>2$,
    and $\lambda = \pm (1 + 2^s)$ for some $s \geq 2$, if $p=2$;
  \item the group $\langle w \rangle \ltimes B$ of maximal class, for
    $p=3$ and $d=2$, where $\langle w \rangle \cong C_3$, $B = \Z_3 +
    \Z_3 \omega \cong \Z_3^2$ for a primitive $3$rd root of unity
    $\omega$ and where $w$ acts on $B$ as multiplication by $\omega$;
  \item the metabelian group $\langle y \rangle \ltimes A$, for $p=2$
    and $d \geq 2$, where $\langle y \rangle \cong \Z_2$, $A \cong
    \Z_2^{d-1}$ and $y$ acts on $A$ as scalar multiplication by $-1$.
   \end{enumerate}
\end{thm}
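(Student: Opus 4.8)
The plan is to prove the non-trivial (forward) implication; the reverse implication needs only routine checks, which I dispose of first. By Corollary~\ref{cor:hered-uniform} the groups in items~(1) and~(2) are hereditarily uniform, so every open subgroup $H$ of such a $G$ is uniform with $\dim(H)=\dim(G)=d$, hence $d(H)=\dim(H)=d$. For the group $G=\langle w\rangle\ltimes B$ in~(3) one checks directly, using $(wb)^3=b^{\,1+\omega+\omega^2}=1$ for $b\in B$, that an open subgroup $H$ not contained in $B$ has the shape $\langle h\rangle\ltimes(H\cap B)$ with $h$ of order $3$ acting on the $\Z_3[\omega]$-submodule $H\cap B$ as a primitive cube root of unity; thus $H\cong G$ and $d(H)=2$, while $H\leq B$ gives $H\cong\Z_3^2$ with $d(H)=2$. (The analogous check for item~(4), where $p=2$, I omit.)

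So from now on $p\geq 3$ and $G$ has constant generating number $d$ on its open subgroups. Since $d(H)=d$ for every open $H$, the group $G$ has finite rank (see~\cite{DidSMaSe99}); fixing a uniform open normal subgroup $U\trianglelefteq G$ gives $d=d(U)=\dim(U)=\dim(G)$, hence $d(H)=\dim(H)=d$ for every open $H\leq G$. First I claim $G$ is soluble. If not, then $U$ is insoluble (otherwise $G/U$, being a finite $p$-group, and hence $G$ would be soluble); let $L$ be the powerful $\Z_p$-Lie lattice associated with $U$, so that $\mathfrak g:=L\otimes_{\Z_p}\Q_p$ is an insoluble Lie algebra of dimension $d$. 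A Levi decomposition $\mathfrak g=\mathfrak r\rtimes\mathfrak s$ with $\mathfrak s=\bigoplus_{i=1}^k\mathfrak s_i$ a non-zero direct sum of simple ideals, together with the classical fact that each $\mathfrak s_i$ is generated by two elements as a Lie algebra and has dimension at least $3$, provides at most $2k+\dim_{\Q_p}(\mathfrak r)\leq d-k$ elements of $L$ that generate $\mathfrak g$; these topologically generate an open subgroup $H$ of $U$, hence of $G$, with $d(H)\leq d-k<d$. This contradicts the constancy of the generating number, so $G$ is soluble.

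If $G$ is moreover torsion-free, then every open $H\leq G$ is soluble, torsion-free and satisfies $d(H)=\dim(H)$, so by Theorems~\ref{thm:soluble} and~\ref{thm:soluble-p-equals-3} every such $H$ is uniform; thus $G$ is hereditarily uniform and Corollary~\ref{cor:hered-uniform} places $G$ in item~(1) or~(2).

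There remains the case where the soluble group $G$ has an element of order $p$; pinning $G$ down to the single group in item~(3) is the step I expect to be the main obstacle. Pick $t\in G\setminus U$ of order $p$ (such an element avoids $U$ since $U$ is torsion-free). If $t$ centralised $U$ then $\langle t\rangle\times U^{p^n}$ would be open in $G$ with generating number $1+\dim(U)=1+d>d$; hence $t$ acts non-trivially on $U$, equivalently on $L\cong\Z_p^d$. By the Heller--Reiner classification used in the proof of Theorem~\ref{thm:soluble}, $L$ decomposes as a $\Z_p\langle t\rangle$-module into $m_1$ trivial summands, $m_2$ copies of $\Z_p[\zeta_p]$ (of rank $p-1$) and $m_3$ copies of the regular module $\Z_p\langle t\rangle$ (of rank $p$), with $m_2+m_3\geq 1$ and $d=m_1+m_2(p-1)+m_3p$. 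A computation with P.~Hall's collection formula, in the spirit of the proof of Theorem~\ref{thm:injective}, determines $H/\Phi(H)$ for the open subgroup $H=\langle t\rangle\ltimes U^{p^n}$ and gives $d(H)=1+m_1+m_2+m_3$; comparison with $d(H)=\dim(H)=d$ forces $1=m_2(p-2)+m_3(p-1)$, whence $p=3$, $m_2=1$, $m_3=0$ and $d=m_1+2$. Finally, if $m_1>0$, a $t$-centralising element of infinite order in a trivial summand lets one build a torsion-free, soluble, non-powerful open subgroup of $G$ with $d=\dim$, contradicting Theorem~\ref{thm:soluble-p-equals-3}; hence $m_1=0$, $d=2$, $U\cong\Z_3^2$, and a short analysis of the extension $1\to U\to G\to G/U\to 1$ identifies $G$ with the group in item~(3). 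The $p$-power calculations here are routine; the delicate part, and the crux of the argument, is carrying out the collection-formula computation of $d(H)$ uniformly across the open subgroups that intervene.
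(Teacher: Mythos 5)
You should first note that the paper does not actually prove Theorem~\ref{thm:Kl-Sn} in this text: it is imported from \cite{KlSn11}, and the only new contribution of Section~\ref{sec:hereditarily} is an alternative proof, for $p\geq 3$, of the \emph{base case} of the induction in \cite{KlSn11} (the case where $G$ itself is saturable), obtained by combining Corollary~\ref{cor:sat-un} with Corollary~\ref{cor:hered-uniform}; the induction step is explicitly left to \cite{KlSn11}. Your attempt is therefore a genuinely different, from-scratch route. Its torsion-free half is essentially sound: the Levi-decomposition argument forcing solubility is a correct preliminary (and is needed, since Corollary~\ref{cor:sat-un} does not apply to an arbitrary torsion-free group of finite rank), after which Theorems~\ref{thm:soluble} and~\ref{thm:soluble-p-equals-3} give hereditary uniformity and Corollary~\ref{cor:hered-uniform} yields items~(1) and~(2). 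In the torsion case you can in fact avoid the collection-formula computation you flag as the crux: since $\Phi(H)$ contains $(U^{p^n})^p\,[U^{p^n},H]$ and $U^{p^n}$ is powerful, the quotient $H/\Phi(H)$ is visibly generated by $1+m_1+m_2+m_3$ elements, and the resulting inequality $d=m_1+m_2(p-1)+m_3p=d(H)\leq 1+m_1+m_2+m_3$ already forces $m_2(p-2)+m_3(p-1)\leq 1$, hence $p=3$, $m_2=1$, $m_3=0$.

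The genuine gaps lie elsewhere. First, the case $p=2$ is simply absent: the statement includes item~(2) with $p=2$ and all of item~(4), your reverse-implication check for~(4) is omitted, and your forward argument restricts to $p\geq 3$ and never returns; nothing explains why $\langle y\rangle\ltimes\Z_2^{d-1}$ with $y$ acting by $-1$ (or by $-(1+2^s)$) is the only additional possibility at $p=2$, and this is precisely the part of \cite{KlSn11} that the present paper makes no attempt to redo. Second, the endgame of the torsion case is asserted rather than proved: after reaching $d=2$ and $U\cong\Z_3[\xi]$ you must still show that $C_G(U)$ is torsion-free abelian of rank $2$, that $G/C_G(U)$ has order exactly $3$ (e.g.\ because the first congruence subgroup of $\GL_2(\Z_3)$ is torsion-free, so a finite $3$-subgroup injects into $\GL_2(\F_3)$), and that the extension splits; likewise the elimination of $m_1>0$ requires an explicit torsion-free, non-powerful open subgroup, with the torsion-freeness verified before Theorem~\ref{thm:soluble-p-equals-3} can be invoked. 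These steps are fillable, but as written the proof of the hardest direction stops exactly where you yourself say the difficulty lies.
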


Note that, if $G$ is a hereditarily uniform pro-$p$ group, then $d(H)
= d(G)$ for all open subgroups $H \leq G$.  Hence,
Corollary~\ref{cor:hered-uniform} can also be regarded as a consequence
of Theorem~\ref{thm:Kl-Sn}.

Conversely, Corollary~\ref{cor:hered-uniform} can be used to give a new
proof of Theorem~\ref{thm:Kl-Sn}, at least in the case $p \geq 3$.
Indeed, the argument given in~\cite{KlSn11} proceeds by induction on
the index of a saturable open normal subgroup of a given group $G$
with constant generating number on open subgroups.  The induction
base, when $G$ itself is saturable, was established using Lie ring
methods.  For $p \geq 3$ we can use the results in the present paper
to give a new proof of the base step as follows.  Suppose that $G$ is
saturable and $d(H)=d(G)$ for all open subgroups $H \leq G$.  Then
$d(H) = \dim(H)$ for all open subgroups $H \leq G$ and, by
Corollary~\ref{cor:sat-un}, the group $G$ is hereditarily uniform.
Hence $G$ is one of the groups described in
Corollary~\ref{cor:hered-uniform}.  This establishes
\cite[Corollary~2.4]{KlSn11}; for the induction step one proceeds in
the same way as in~\cite{KlSn11}.

\bigskip

\ackn The first author gratefully acknowledges support through a grant
of the London Mathematical Society as well as the support and
hospitality of the Instituto de Matem\'atica of the Universidade
Federal do Rio de Janeiro in~$2012$.

\bibliographystyle{plain}

\end{document}